\theoremstyle{plain}
\newtheorem{thm}{\protect\theoremname}
  \theoremstyle{definition}
  \newtheorem{defn}[thm]{\protect\definitionname}
  \theoremstyle{plain}
  \newtheorem{lem}[thm]{\protect\lemmaname}
  \theoremstyle{remark}
  \newtheorem{rem}[thm]{\protect\remarkname}
  \theoremstyle{plain}
  \newtheorem{prop}[thm]{\protect\propositionname}
\DeclareMathOperator{\Id}{\text{Id}}
  \providecommand{\definitionname}{Definition}
  \providecommand{\lemmaname}{Lemma}
  \providecommand{\propositionname}{Proposition}
  \providecommand{\remarkname}{Remark}
\providecommand{\theoremname}{Theorem}
\begin{document}

\title[Construction of solutions]{Construction of solutions for a nonlinear elliptic
problem on Riemannian manifolds with boundary}

\thanks{The authors were  supported by Gruppo Nazionale per l'Analisi Matematica, la Probabilit\`{a} e le loro 
Applicazioni (GNAMPA) of Istituto Nazionale di Alta Matematica (INdAM)}

\author{Marco Ghimenti}
\address{M. Ghimenti, \newline Dipartimento di Matematica Universit\`a di Pisa
Largo B. Pontecorvo 5, 56126 Pisa, Italy}
\email{ghimenti@mail.dm.unipi.it}

\author{Anna Maria Micheletti}
\address{A. M. Micheletti, \newline Dipartimento di Matematica Universit\`a di Pisa
Largo B. Pontecorvo 5, 56126 Pisa, Italy}
\email{a.micheletti@dma.unipi.it.}

\keywords{Riemannian manifold with boundary, Nonlinear elliptic equations, Neumann boundary condition, Mean curvature, Liapounov Schmidt}

\subjclass[2010]{58J05,35J60,58E05}

\begin{abstract}
Let $(M,g)$ be a smooth compact, $n$ dimensional Riemannian manifold,
$n\ge2$ with smooth $n-1$ dimensional boundary $\partial M$. We
prove that the stable critical points of the mean curvature of the
boundary generates $H^{1}(M)$ solutions for the singularly perturbed
elliptic problem with Neumann boundary conditions 
\[
\left\{ \begin{array}{cc}
-\varepsilon^{2}\Delta_{g}u+u=u^{p-1} & \text{in }M\\
u>0 & \text{in }M\\
{\displaystyle \frac{\partial u}{\partial\nu}=0} & \text{on }\partial M
\end{array}\right.
\]
when $\varepsilon$ is small enough. Here $p$ is subcritical.
\end{abstract}
\maketitle
\section{Introduction}

Let $(M,g)$ be a smooth compact, $n$ dimensional Riemannian manifold,
$n\ge2$ with boundary $\partial M$ which is the union of a finite
number of connected, smooth, boundaryless, $n-1$ submanifolds embedded
in $M$. Here $g$ denotes the Riemannian metric tensor. By Nash theorem
\cite{Na54} we can consider $(M,g)$ as a regular submanifold embedded
in $\mathbb{R}^{N}$. 

We consider the following Neumann problem
\begin{equation}
\left\{ \begin{array}{cc}
-\varepsilon^{2}\Delta_{g}u+u=u^{p-1} & \text{in }M\\
u>0 & \text{in }M\\
{\displaystyle \frac{\partial u}{\partial\nu}=0} & \text{on }\partial M
\end{array}\right.\label{eq:P}
\end{equation}
where $p>2$ if $n=2$ and $2<p<2^{*}=\frac{2n}{n-2}$ if $n\ge3$,
$\nu$ is the external normal to $\partial M$ and $\varepsilon$
is a positive parameter. 

We are interested in finding solutions $u\in H^{1}(M)$ to problem
(\ref{eq:P}), where 
\[
{\displaystyle H^{1}(M)=\left\{ u:M\rightarrow\mathbb{R}:\int_{M}|\nabla u|_g^{2}+u^{2}d\mu_{g}<\infty\right\} }
\]
and $\mu_{g}$ denotes the volume form on $M$ associated to $g$.
More precisely, we want to show that, for $\varepsilon$ sufficiently
small, we can construct a solution which has a peak near a stable
critical point of the scalar curvature of the boundary, as stated
in the following.
\begin{defn}
Let $f\in C^{1}(N,\mathbb{R})$, where $(N,g)$ is a Riemannian manifold.
We say that $K\subset N$ is a $C^{1}$-stable critical set of $f$
if $K\subset\left\{ x\in N\ :\ \nabla f(x)=0\right\} $ and for
any $\mu>0$ there exists $\delta>0$ such that, if $h\in C^{1}(N,\mathbb{R})$
with 
\[
\max_{d_{g}(x,K)\le\mu}|f(x)-h(x)|+|\nabla f(x)-\nabla h(x)|\le\delta,
\]
then $h$ has a critical point $x_{0}$ with $d_{g}(x_{0},K)\le\mu$.
Here $d_{g}$ denotes the geodesic distance associated to the Riemannian
metric $g$.

Now we can state the main theorem.\end{defn}
\begin{thm}
\label{thm:main}Assume $K\subset\partial M$ is a $C^{1}$-stable
critical set of the mean curvature of the boundary. Then there exists
$\varepsilon_{0}>0$ such that, for any $\varepsilon\in(0,\varepsilon_{0})$,
Problem (\ref{eq:P}) has a solution $u_{\varepsilon}\in H^{1}(M)$
which concentrates at a point $\xi_{0}\in K$ as $\varepsilon$ goes
to zero.
\end{thm}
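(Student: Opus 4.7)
The plan is to prove Theorem \ref{thm:main} by a Lyapunov--Schmidt reduction, a scheme widely used for singularly perturbed elliptic problems. The idea is to build an $(n-1)$-parameter manifold of approximate solutions $\{W_{\varepsilon,\xi}\}_{\xi\in\partial M}$ concentrated near boundary points, solve the component of \eqref{eq:P} orthogonal to this manifold by a contraction, and reduce the remaining equation to a finite-dimensional problem on $\partial M$ whose leading variable term is controlled by the mean curvature $H$.

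To construct $W_{\varepsilon,\xi}$, I first identify the formal limit problem. Blowing up at a boundary point via Fermi coordinates and the change of variables $v(y)=u(\varepsilon y)$ produces the half-space problem
\[
-\Delta U+U=U^{p-1}\ \text{in }\mathbb{R}^n_+,\quad U>0,\quad \partial U/\partial y_n=0\ \text{on }\partial\mathbb{R}^n_+,
\]
which admits a unique positive ground state $U$, radial in the tangential variables, exponentially decaying and non-degenerate: the kernel of its linearization is exactly $\mathrm{span}\{\partial_{y_i}U:i=1,\dots,n-1\}$. I then define $W_{\varepsilon,\xi}$ on $M$ by transplanting $U$ via Fermi coordinates at $\xi\in\partial M$, multiplied by a cutoff supported in a small fixed tubular neighborhood of $\partial M$. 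This yields a smooth family of approximate solutions whose $H^1$-error in \eqref{eq:P} is of higher order than the terms that will matter for the reduction.

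Next I look for a true solution of the form $u=W_{\varepsilon,\xi}+\phi$ with $\phi$ in the $H^1(M)$-orthogonal complement of $T_\xi:=\mathrm{span}\{\partial_{\xi^i}W_{\varepsilon,\xi}:i=1,\dots,n-1\}$. Projecting \eqref{eq:P} onto $T_\xi^{\perp}$ produces a fixed-point equation for $\phi$, which I solve by a contraction argument. The decisive ingredient is the uniform invertibility of the linearized operator
\[
L_{\varepsilon,\xi}:=-\varepsilon^2\Delta_g+1-(p-1)W_{\varepsilon,\xi}^{p-2}
\]
on $T_\xi^{\perp}$, with a bound independent of $\xi$ and $\varepsilon$; this follows from the non-degeneracy of $U$ by a standard blow-up/compactness argument. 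The contraction then yields a unique small $\phi_{\varepsilon,\xi}$, depending smoothly on $\xi$ and of higher order than the error of $W_{\varepsilon,\xi}$.

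The problem then reduces to finding a critical point of the reduced functional $\Phi_\varepsilon(\xi):=I_\varepsilon(W_{\varepsilon,\xi}+\phi_{\varepsilon,\xi})$, where $I_\varepsilon$ is the energy associated with \eqref{eq:P}. Using the fact that in Fermi coordinates the metric expansion produces the second fundamental form, and therefore the mean curvature $H(\xi)$, at the first nontrivial order, I expect an expansion
\[
\Phi_\varepsilon(\xi)=\varepsilon^n\bigl[A+\varepsilon B\,H(\xi)+o(\varepsilon)\bigr]\quad\text{in }C^1(\partial M),
\]
with explicit constants $A\in\mathbb{R}$ and $B\neq 0$ depending only on $n$, $p$ and $U$. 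The main technical difficulty will be obtaining this expansion \emph{uniformly in the $C^{1}$ sense}, since one has to differentiate through the implicit correction $\phi_{\varepsilon,\xi}$ and handle carefully the boundary integrals arising from Fermi coordinates. Once the expansion is established, the $C^{1}$-stability of $K$, applied to $H$, gives for every sufficiently small $\varepsilon$ a critical point $\xi_\varepsilon$ of $\Phi_\varepsilon$ with $d_g(\xi_\varepsilon,K)\to 0$, and $u_\varepsilon:=W_{\varepsilon,\xi_\varepsilon}+\phi_{\varepsilon,\xi_\varepsilon}$ is the desired solution: it is positive by a maximum principle argument (or by replacing $u^{p-1}$ with $u_+^{p-1}$ from the outset and using $\|\phi_{\varepsilon,\xi_\varepsilon}\|\to 0$), and concentrates at a point $\xi_0\in K$ as $\varepsilon\to 0$.
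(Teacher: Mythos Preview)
Your proposal is correct and follows essentially the same Lyapunov--Schmidt scheme as the paper: build $W_{\varepsilon,\xi}$ by transplanting the half-space ground state $U$ via Fermi coordinates, invert the linearized operator on the complement of the approximate kernel by a blow-up/nondegeneracy argument, solve for the correction $\phi_{\varepsilon,\xi}$ by contraction, and reduce to a finite-dimensional functional on $\partial M$ with the $C^{1}$ expansion $C-\varepsilon\alpha H(\xi)+o(\varepsilon)$, then invoke $C^{1}$-stability of $K$. The only cosmetic differences are that the paper parameterizes the approximate kernel by the transplanted functions $Z_{\varepsilon,\xi}^{i}$ (built from $\partial_{y_i}U$) rather than by $\partial_{\xi^i}W_{\varepsilon,\xi}$, normalizes the energy $J_\varepsilon$ by $\varepsilon^{-n}$ so that no $\varepsilon^n$ prefactor appears in the expansion, and uses $f(u)=(u^+)^{p-1}$ from the outset as you suggest; also note that $U$ is fully radial (it is the restriction to $\mathbb{R}^n_+$ of the radial ground state on $\mathbb{R}^n$), not merely radial in the tangential variables.
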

Problem (\ref{eq:P}) in a flat domain has a long history. Starting
from a problem of pattern formation in biology, Lin, Ni and Takagi
\cite{LNT,NT1} showed the existence of a mountain pass solution for
Problem (\ref{eq:P}) and proved that this solution has exactly one
maximum point which lies on the boundary of the domain. Moreover in
\cite{NT2} the authors proved that the maximum point of the solution
approaches the maximum point for the mean curvature of the boundary
when the perturbation parameter $\varepsilon$ goes to zero. 

Thenceforth, many papers were devoted to the study of Problem (\ref{eq:P})
on flat domains. In particular, in \cite{DFW,W1} it is proved that
any stable critical point of the mean curvature of the boundary generated
a single peaked solution whose peak approaches the critical point
as $\varepsilon$ vanishes. Moreover in \cite{G,Li,WW,GWW} the existence
of multipeak solutions whose peaks lies on the boundary is studied.
We also mention a series of works in which the authors proved the
existence of solutions which have internal peaks \cite{W98,GuW,GrPi00,GrPiW}.

In the case of a manifold $M$, Problem (\ref{eq:P}) has been firstly
studied in \cite{BP05} where the authors prove the existence of a
mountain pass solution when the manifold $M$ is closed and when the
manifold $M$ has a boundary. They show that for $\varepsilon$ small
such a solution has a spike which approaches -as $\varepsilon$ goes
to zero- a maximum point of the scalar curvature when $M$ is closed
and a maximum point of the mean curvature of the boundary when $M$
has a boundary. 

In the case of $M$ closed manifold, in \cite{BBM07} the authors
show that Problem (\ref{eq:P}) has at least cat$M+1$ non trivial
positive solutions when $\varepsilon$ goes to zero. Here cat$M$
denotes the Lusternik-Schnirelmann category of $M$. Moreover in \cite{MP09}
the effect of the geometry of the manifold $(M,g)$ is examined. In
fact it is shown that positive solution of the problem are generated
by stable critical points of the scalar curvature of $M$. 

More recently we proved in \cite{GMtmna} in the case of a manifold
$M$ with boundary $\partial M$ that Problem (\ref{eq:P}) has at
least cat$\partial M$ non trivial positive solutions when $\varepsilon$
goes to zero. We can compare the result of \cite{GMtmna} with Theorem
\ref{thm:main}. In fact, in \cite{GMjfpta} the authors prove that
generically with respect to the metric $g,$ the mean curvature of
the boundary has nondegenerate critical points. More precisely, the
set of metrics for which the mean curvature has only nondegenerate
critical points is an open dense set among all the $C^{k}$ metrics
on $M$, $k\ge3$. Thus, generically with respect to the metric, the
mean curvature has $P_{1}(\partial M)$ nondegenerate (hence stable)
critical points, where $P_{1}(\partial M)$ is the Poincar\'e polynomial
of $\partial M$, namely $P_{t}(\partial M)$, evaluated in $t=1$.
So, generically with respect to metric, Problem (\ref{eq:P}) has
$P_{1}(\partial M)$ solution, and it holds $P_{1}(\partial M)\ge\text{cat}\partial M$,
and in many cases the strict inequality holds.

The paper is organized as follows. In Section \ref{prel} some preliminary notions are introduced, which are necessary 
to the comprehension of the paper. In Section \ref{sec:Reduction} we study the variational structure of the problem and we 
perform the finite dimensional reduction. In Section \ref{proof} the proof of Theorem \ref{thm:main} is sketched while the 
expansion of the reduced functional is postponed in Section \ref{expansion}. The Appendix collects some technical lemmas.

\section{Preliminary results}\label{prel}

In this section we give some general facts preliminary to our work.
These results are widely present in literature, anyway, we refer
mainly to \cite{BP05,Es92,Es94,MP09} and the reference therein.

First of all we need to define a suitable coordinate chart on the
boundary. 

We know that on the tangent bundle of any compact Riemannian manifold
$\mathcal{M}$ it is defined the exponential map $\exp:T\mathcal{M}\rightarrow\mathcal{M}$
which is of class $C^{\infty}$. Moreover there exists a constant
$R_{M}>0$, called radius of injectivity, and a finite number of $x_{i}\in\mathcal{M}$
such that $\mathcal{M}=\cup_{i=1}^{l}B_{g}(x_{i},R_{M})$ and $\exp_{x_{i}}:B(0,R_{M})\rightarrow B_{g}(x_{i},R_{M})$
is a diffeormophism for all $i$. By choosing an orthogonal coordinate
system $(y_{1},\dots,y_{n})$ of $\mathbb{R}^{n}$ and identifying
$T_{x_{0}}\mathcal{M}$ with $\mathbb{R}^{n}$ for $x_{0}\in\mathcal{M}$
we can define by the exponential map the so called normal coordinates.
For $x_{0}\in\mathcal{M},$ $g_{x_{0}}$ denotes the metric read through
the normal coordinates. In particular, we have $g_{x_{0}}(0)=\Id$.
We set $\left\vert g_{x_{0}}(y)\right\vert =\det\left(g_{x_{0}}(y)\right)_{ij}$
and $g_{x_{0}}^{ij}(y)=\left(\left(g_{x_{0}}(y)\right)_{ij}\right)^{-1}$.
\begin{defn}
If $q$ belongs to the boundary $\partial M$, let $\bar{y}=\left(y_{1},\dots,y_{n-1}\right)$
be Riemannian normal coordinates on the $n-1$ manifold $\partial M$
at the point $q$. For a point $\xi\in M$ close to $q$, there exists
a unique $\bar{\xi}\in\partial M$ such that $d_{g}(\xi,\partial M)=d_{g}(\xi,\bar{\xi})$.
We set $\bar{y}(\xi)\in\mathbb{R}^{n-1}$ the normal coordinates for
$\bar{\xi}$ and $y_{n}(\xi)=d_{g}(\xi,\partial M)$. Then we define
a chart $\psi_{q}^{\partial}:\mathbb{R}_{+}^{n}\rightarrow M$ such
that $\left(\bar{y}(\xi),y_{n}(\xi)\right)=\left(\psi_{q}^{\partial}\right)^{-1}(\xi)$.
These coordinates are called \emph{Fermi coordinates} at $q\in\partial M$.
The Riemannian metric $g_{q}\left(\bar{y},y_{n}\right)$ read through
the Fermi coordinates satisfies $g_{q}(0)=\Id$. 
\end{defn}
We note by $d_{g}^{\partial}$ and $\exp^{\partial}$ respectively
the geodesic distance and the exponential map on by $\partial M$.
By compactness of $\partial M$, there is an $R^{\partial}$ and a
finite number of points $q_{i}\in\partial M$, $i=1,\dots,k$ such
that 
\[
I_{q_{i}}(R^{\partial},R_{M}):=\left\{ x\in M,\, d_{g}(x,\partial M)=d_{g}(x,\bar{\xi})<R_{M},\, d_{g}^{\partial}(q_{i},\bar{\xi})<R^{\partial}\right\} 
\]
form a covering of $\left(\partial M\right)_{R^M}:=\{ x\in M,\, d_{g}(x,\partial M)<R^M \}$ and on every
$I_{q_{i}}$ the Fermi coordinates are well defined. In the following
we choose, $R=\min\left\{ R^{\partial},R_{M}\right\} $, such that
we have a finite covering 
\[
M\subset\left\{ \cup_{i=1}^{k}B(q_{i},R)\right\} \bigcup\left\{ \cup_{i=k+1}^{l}I_{\xi_{i}}(R,R)\right\} 
\]
where $k,l\in\mathbb{N}$, $q_{i}\in M\smallsetminus\partial M$ and
$\xi_{i}\in\partial M$.

For $p\in\partial M$, consider $\pi_{p}:T_{p}M\rightarrow T_{p}\partial M$
the projection on the tangent space $T_{p}\partial M$. For a pair
of tangent vectors $X,Y\in T_{p}\partial M$ we define the second
fundamental form $II_{p}(X,Y):=\nabla_{X}Y-\pi_{p}(\nabla_{X}Y)$.
The mean curvature at the boundary $H_{p}$, where $p\in\partial M$
is the trace of the second fundamental form.

If we consider Fermi coordinates in a neighborhood of $p$, and we
note by the matrix $(h_{ij})_{i,j=1,\dots,n-1}$ the second fundamental
form, we have the well known formulas
\begin{eqnarray}
g^{ij}(y) & = & \delta_{ij}+2h_{ij}(0)y_{n}+O(|y|^{2})\text{ for }i,j=1,\dots n-1\label{eq:g1}\\
g^{in}(y) & = & \delta_{in}\label{eq:g2}\\
\sqrt{g}(y) & = & 1-(n-1)H(0)y_{n}+O(|y|^{2})\label{eq:g3}
\end{eqnarray}
where $(y_{1},\dots,y_{n})$ are the Fermi coordinates and, by definition
of $h_{ij}$, 
\begin{equation}
H=\frac{1}{n-1}\sum_{i}^{n-1}h_{ii}.\label{eq:H}
\end{equation}
Also, by Escobar \cite[eq. (3.2)]{Es92}, we have that 
\begin{equation}
\left.\frac{\partial^{2}}{\partial y_{n}\partial y_{i}}\sqrt{g}(y)\right|_{y=0}=-(n-1)\frac{\partial H}{\partial y_{i}}(0)\text{ for }i=1,\dots,n-1\label{eq:G}
\end{equation}


It is well known that, in $\mathbb{R}^{n}$, there is a unique positive
radially symmetric function $V(y)\in H^{1}(\mathbb{R}^{n})$ satisfying
\begin{equation}
-\Delta V+V=V^{p-1}\text{ on }\mathbb{R}^{n}.\label{eq:rn}
\end{equation}
 Moreover, the function $V$ exponentially decays at infinity as well
as its derivative, that is, for some $c>0$ 
\begin{eqnarray*}
\lim_{|y|\rightarrow\infty}V(|y|)|y|^{\frac{n-1}{2}}e^{|y|}=c &  & \lim_{|y|\rightarrow\infty}V'(|y|)|y|^{\frac{n-1}{2}}e^{|y|}=-c.
\end{eqnarray*}
We can define on the half space $\mathbb{R}_{+}^{n}=\left\{ (y_{1,}\dots,y_{n})\in\mathbb{R}^{n}\ ,\ y_{n}\ge0\right\} $
the function
\[
U(y)=\left.V\right|_{y_{n}\ge0}.
\]
 The function $U$ satisfies the following Neumann problem in $\mathbb{R}_{+}^{n}$
\begin{equation}
\left\{ \begin{array}{cc}
-\Delta U+U=U^{p-1} & \text{in }\mathbb{R}_{+}^{n}\\
{\displaystyle \frac{\partial U}{\partial y_{n}}=0} & \text{on }\left\{ y_{n}=0\right\} .
\end{array}\right.\label{eq:P-Rn}
\end{equation}
We set $U_{\varepsilon}(y)=U\left(\frac{y}{\varepsilon}\right)$.
\begin{lem}
The space solution of the linearized problem
\begin{equation}
\left\{ \begin{array}{cc}
-\Delta\varphi+\varphi=(p-1)U^{p-2}\varphi & \text{ in }\mathbb{R}_{+}^{n}\\
{\displaystyle \frac{\partial\varphi}{\partial y_{n}}=0} & \text{on }\left\{ y_{n}=0\right\} .
\end{array}\right.\label{eq:linear}
\end{equation}
 is generated by the linear combination of 
\[
\varphi^{i}=\frac{\partial U}{\partial y_{i}}(y)\text{ for }i=1,\dots,n-1.
\]
\end{lem}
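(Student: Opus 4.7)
The plan is to reduce the Neumann problem on the half-space $\mathbb{R}^n_+$ to the well-studied linearized problem on all of $\mathbb{R}^n$ via even reflection in the last variable, and then invoke the classical non-degeneracy result for the ground state $V$.

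First I would check that the $\varphi^i$ listed in the statement actually belong to the kernel: differentiating equation (\ref{eq:P-Rn}) with respect to $y_i$ for $i=1,\dots,n-1$ gives $-\Delta(\partial U/\partial y_i)+\partial U/\partial y_i=(p-1)U^{p-2}\partial U/\partial y_i$ on $\mathbb{R}^n_+$, and the tangential derivatives respect the Neumann condition on $\{y_n=0\}$ because $V$ (and hence $U$) is radial, so $\partial^2 V/\partial y_n\partial y_i$ vanishes at $y_n=0$ for $i\ne n$. Thus $\varphi^i\in H^1(\mathbb{R}^n_+)$ solves (\ref{eq:linear}). Moreover these are linearly independent (they are independent directional derivatives of the strictly decreasing radial profile restricted to $y_n\ge 0$).

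Next, given any $\varphi\in H^1(\mathbb{R}^n_+)$ solving (\ref{eq:linear}), I would extend it by even reflection across $\{y_n=0\}$, setting
\[
\tilde\varphi(\bar y,y_n)=\begin{cases} \varphi(\bar y,y_n) & y_n\ge 0,\\ \varphi(\bar y,-y_n) & y_n<0.\end{cases}
\]
The Neumann boundary condition $\partial\varphi/\partial y_n=0$ on $\{y_n=0\}$ guarantees that $\tilde\varphi\in H^1(\mathbb{R}^n)$ and that the weak formulation of the linearized equation extends across the hyperplane. Since the potential $V^{p-2}$ is itself even in $y_n$ (because $V$ is radial), $\tilde\varphi$ satisfies
\[
-\Delta\tilde\varphi+\tilde\varphi=(p-1)V^{p-2}\tilde\varphi\quad\text{on }\mathbb{R}^n.
\]

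At this point I would invoke the classical non-degeneracy of the positive radial ground state $V$ on $\mathbb{R}^n$ (the theorem of Kwong, together with the results of Oh): the kernel of the linearized operator $-\Delta+1-(p-1)V^{p-2}$ in $H^1(\mathbb{R}^n)$ is exactly $\mathrm{span}\{\partial V/\partial y_1,\dots,\partial V/\partial y_n\}$. Hence
\[
\tilde\varphi=\sum_{i=1}^{n}c_i\frac{\partial V}{\partial y_i}
\]
for some constants $c_i$. Since $\tilde\varphi$ is even in $y_n$ while $\partial V/\partial y_n$ is odd in $y_n$ (and the other $\partial V/\partial y_i$ are even in $y_n$), the parity forces $c_n=0$. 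Restricting back to $\mathbb{R}^n_+$ yields $\varphi=\sum_{i=1}^{n-1}c_i\varphi^i$, which is the desired representation.

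The main obstacle is really just the appeal to the non-degeneracy theorem on $\mathbb{R}^n$, which is a deep but well-documented fact; the reflection argument and the parity bookkeeping are then short and mechanical. One should also ensure sufficient decay/regularity of $\varphi$ (standard elliptic bootstrap from the $H^1$ equation, together with exponential decay of $V^{p-2}$) so that the reflected function genuinely lies in the function space where non-degeneracy is stated.
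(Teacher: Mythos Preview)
Your proposal is correct and follows essentially the same route as the paper: even reflection of a Neumann solution across $\{y_n=0\}$ to obtain a solution of the linearized equation on all of $\mathbb{R}^n$, then the classical non-degeneracy of $V$ together with the evenness in $y_n$ to kill the $\partial V/\partial y_n$ component. Your write-up is in fact more explicit than the paper's (you spell out the parity argument for $c_n=0$ and the verification that the $\varphi^i$ satisfy the Neumann condition), but the underlying argument is identical.
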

\begin{proof}
It is trivial that every linear combination of $\varphi^{i}$ is a
solution of (\ref{eq:linear}). We notice that ${\displaystyle \frac{\partial U}{\partial y_{n}}}$
is not a solution of (\ref{eq:linear}) because the derivative on
$\{y_{n}=0\}$ is not zero. 

For the converse, suppose $\bar{\varphi}(y)$ be a solution of (\ref{eq:linear}).
Then, by even reflection around $y_n$, we can construct a solution $\tilde{\varphi}$ of
\begin{equation}
-\Delta\tilde\varphi+\tilde\varphi=(p-1)U^{p-2}\tilde\varphi \text{ in }\mathbb{R}^{n}
\label{eq:linRN}
\end{equation}
 with ${\displaystyle \frac{\partial\tilde{\varphi}}{\partial y_{n}}=0}$
on $y_{n}=0$. But all solution of (\ref{eq:linRN}) with zero derivative
on $y_{n}=0$ are linear combination of ${\displaystyle {\displaystyle \frac{\partial V}{\partial y_{j}}}}$
with $j=1,\cdots,n-1$. 
\end{proof}

We endow $H^{1}(M)$ with the scalar product ${\displaystyle \left\langle u,v\right\rangle _{\varepsilon}:=\frac{1}{\varepsilon^{n}}\int_{M}\varepsilon^{2}g(\nabla u,\nabla v)+uvd\mu_{g}}$
and the norm $\|u\|_{\varepsilon}=\left\langle u,u\right\rangle _{\varepsilon}^{1/2}$.
We call $H_{\varepsilon}$ the space $H^{1}$ equipped with the
norm $\|\cdot\|_{\varepsilon}$. We also define $L_{\varepsilon}^{p}$
as the space $L^{p}(M)$ endowed with the norm ${\displaystyle |u|_{\varepsilon,p}=\left(\frac{1}{\varepsilon^{n}}\int_{M}u^{p}d\mu_{g}\right)^{1/p}}$. 

For any $p\in[2,2^{*})$ if $n\ge3$ or for all $p\ge2$ if $n=2$,
the embedding $i_{\varepsilon}:H_{\varepsilon}\hookrightarrow L_{\varepsilon,p}$
is a compact, continuous map, and it holds $|u|_{\varepsilon,p}\le c\|u\|_{\varepsilon}$
for some constant $c$ not depending on $\varepsilon$. We define
the adjoint operator $i_{\varepsilon}^{*}:L_{\varepsilon,p'}:\hookrightarrow H_{\varepsilon}$
as 
\[
u=i_{\varepsilon}^{*}(v)\ \Leftrightarrow\ \left\langle u,\varphi\right\rangle _{\varepsilon}=\frac{1}{\varepsilon^{n}}\int_{M}v\varphi d\mu_{g},
\]
 so we can rewrite problem (\ref{eq:P}) in an equivalent formulation
\[
u=i_{\varepsilon}^{*}\left(\left(u^{+}\right)^{p-1}\right).
\]

\begin{rem}
\label{rem:ieps}We have that $\|i_{\varepsilon}^{*}(v)\|_{\varepsilon}\le c|v|_{p',\varepsilon}$
\end{rem}
From now on we set, for sake of simplicity
\[
f(u)=(u^{+})^{p-1}\text{ and }f'(u)=(p-1)(u^{+})^{p-2}
\]
We want to split the space $H_{\varepsilon}$ in a finite dimensional
space generated by the solution of (\ref{eq:linear}) and its orthogonal
complement. Fixed $\xi\in\partial M$ and $R>0$, we consider on the
manifold the functions 
\begin{equation}
Z_{\varepsilon,\xi}^{i}=\left\{ \begin{array}{ccc}
\varphi_{\varepsilon}^{i}\left(\left(\psi_{\xi}^{\partial}\right)^{-1}(x)\right)\chi_{R}\left(\left(\psi_{\xi}^{\partial}\right)^{-1}(x)\right) &  & x\in I_{\xi}(R):=I_{\xi}(R,R);\\
0 &  & \text{elsewhere}.
\end{array}\right.\label{eq:Zi}
\end{equation}
where ${\displaystyle \varphi_{\varepsilon}^{i}(y)=\varphi^{i}\left(\frac{y}{\varepsilon}\right)}$
and $\chi_{R}:B^{n-1}(0,R)\times[0,R)\rightarrow\mathbb{R}^{+}$ is
a smooth cut off function such that $\chi_{R}\equiv1$ on $B^{n-1}(0,R/2)\times[0,R/2)$
and $|\nabla\chi|\le2$.

In the following, for sake of simplicity, we denote
\begin{equation}
D^{+}(R)=B^{n-1}(0,R)\times[0,R)\label{eq:D+}
\end{equation}

Let 
\[
K_{\varepsilon,\xi}:=\mbox{Span}\left\{ Z_{\varepsilon,\xi}^{1},\cdots,Z_{\varepsilon,\xi}^{n-1}\right\} .
\]
 We can split $H_{\varepsilon}$ in the sum of the $\left(n-1\right)$-dimensional
space and its orthogonal complement with respect of $\left\langle \cdot,\cdot\right\rangle _{\varepsilon}$,
i.e.
\[
K_{\varepsilon,\xi}^{\bot}:=\left\{ u\in H_{\varepsilon}\ ,\ \left\langle u,Z_{\varepsilon,\xi}^{i}\right\rangle _{\varepsilon}=0.\right\} .
\]
We solve problem (\ref{eq:P}) by a Lyapunov Schmidt reduction: defined
\[
W_{\varepsilon,\xi}(x)=\left\{ \begin{array}{ccc}
U_{\varepsilon}\left(\left(\psi_{\xi}^{\partial}\right)^{-1}(x)\right)\chi_{R}\left(\left(\psi_{\xi}^{\partial}\right)^{-1}(x)\right) &  & x\in I_{\xi}(R):=I_{\xi}(R,R);\\
0 &  & \text{elsewhere}.
\end{array}\right.
\]
we look for a function of the form $W_{\varepsilon,\xi}+\phi$ with
$\phi\in K_{\varepsilon,\xi}^{\bot}$ such that 
\begin{eqnarray}
\Pi_{\varepsilon,\xi}^{\bot}\left\{ W_{\varepsilon,\xi}+\phi-i_{\varepsilon}^{*}\left[f\left(W_{\varepsilon,\xi}+\phi\right)\right]\right\}  & = & 0\label{eq:red1}\\
\Pi_{\varepsilon,\xi}\left\{ W_{\varepsilon,\xi}+\phi-i_{\varepsilon}^{*}\left[f\left(W_{\varepsilon,\xi}+\phi\right)\right]\right\}  & = & 0\label{eq:red2}
\end{eqnarray}
where $\Pi_{\varepsilon,\xi}:H_{\varepsilon}\rightarrow K_{\varepsilon,\xi}$
and $\Pi_{\varepsilon,\xi}^{\bot}:H_{\varepsilon}\rightarrow K_{\varepsilon,\xi}^{\bot}$
are, respectively, the projection on $K_{\varepsilon,\xi}$ and $K_{\varepsilon,\xi}^{\bot}$.
We see that $W_{\varepsilon,\xi}+\phi$ is a solution of (\ref{eq:P})
if and only if $W_{\varepsilon,\xi}+\phi$ solves (\ref{eq:red1}-\ref{eq:red2}).

Hereafter we collect a series of results which will be useful in the paper.

\begin{defn}
\label{def:Estorto}Given $\xi_{0}\in\partial M$, using the normal
coordinates on the sub manifold $\partial M$, we define
\[
\mathcal{E}(y,x)=\left(\exp_{\xi(y)}^{\partial}\right)^{-1}(x)=\left(\exp_{\exp_{\xi_{0}}^{\partial}y}^{\partial}\right)^{-1}(\exp_{\xi_{0}}^{\partial}\bar{\eta})=\tilde{\mathcal{E}}(y,\bar{\eta})
\]
where $x,\xi(y)\in\partial M$, $y,\bar{\eta}\in B(0,R)\subset\mathbb{R}^{n-1}$
and $\xi(y)=\exp_{\xi_{0}}^{\partial}y$, $x=\exp_{\xi_{0}}^{\partial}\bar{\eta}$.
$ $Using Fermi coordinates around $\xi_{0}$ in a similar way we
define
\[
\mathcal{H}(y,x)=\left(\psi_{\xi(y)}^{\partial}\right)^{-1}(x)=\left(\psi_{\exp_{\xi_{0}}^{\partial}y}^{\partial}\right)^{-1}\left(\psi_{\xi_{0}}^{\partial}(\bar{\eta},\eta_{n})\right)=\tilde{\mathcal{H}}(y,\bar{\eta},\eta_{n})=(\tilde{\mathcal{E}}(y,\bar{\eta}),\eta_{n})
\]
where $x\in M$, $\eta=(\bar{\eta},\eta_{n})$, with $\bar{\eta}\in B(0,R)\subset\mathbb{R}^{n-1}$
and $0\le\eta_{n}<R$, $\xi(y)=\exp_{\xi_{0}}^{\partial}y\in\partial M$
and $x=\psi_{\xi_{0}}^{\partial}(\eta)$. \end{defn}

\begin{lem}
\label{lem:derWeps}Set $x=\psi_{\xi_{0}}^{\partial}(\varepsilon z)$
where $z=(\bar{z},z_{n})$ and $ $$\xi(y)=\exp_{\xi_{0}}^{\partial}(y)$,
for $j=1,\dots,n-1$ we have 
\[
\left.\frac{\partial}{\partial y_{j}}W_{\varepsilon,\xi(y)}(x)\right|_{y=0}=\sum_{k=1}^{n-1}\left[\frac{1}{\varepsilon}\chi_{R}(\varepsilon z)\frac{\partial}{\partial z_{k}}U(z)+U(z)\frac{\partial}{\partial z_{k}}\chi_{R}(\varepsilon z)\right]\left.\frac{\partial}{\partial y_{j}}\tilde{\mathcal{E}_{k}}(y,\varepsilon\bar{z})\right|_{y=0}.
\]
\end{lem}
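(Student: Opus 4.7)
The plan is a direct chain rule computation. The key ingredient is the factorization of the Fermi coordinate map given in Definition \ref{def:Estorto}, which isolates the $y$-dependence of $W_{\varepsilon,\xi(y)}(x)$ to the tangential part of the coordinates.

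First I would rewrite $W_{\varepsilon,\xi(y)}(x)$ using the auxiliary function $\mathcal{H}$. By definition, $(\psi_{\xi(y)}^{\partial})^{-1}(x) = \mathcal{H}(y,x) = (\tilde{\mathcal{E}}(y,\bar\eta),\eta_n)$, where $(\bar\eta,\eta_n) = (\psi_{\xi_{0}}^{\partial})^{-1}(x)$. Plugging in $x=\psi_{\xi_{0}}^{\partial}(\varepsilon z)$ gives $\bar\eta = \varepsilon \bar z$ and $\eta_n = \varepsilon z_n$, so
\[
W_{\varepsilon,\xi(y)}(x)=U_{\varepsilon}\bigl(\tilde{\mathcal{E}}(y,\varepsilon\bar{z}),\varepsilon z_{n}\bigr)\,\chi_{R}\bigl(\tilde{\mathcal{E}}(y,\varepsilon\bar{z}),\varepsilon z_{n}\bigr).
\]
The crucial point is that the last coordinate $\eta_n = \varepsilon z_n$ does not depend on $y$, so differentiation in $y_j$ only hits the first $n-1$ slots through $\tilde{\mathcal{E}}$.

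Next I would apply the product rule to $U_\varepsilon\cdot \chi_R$ and then the chain rule to both factors, obtaining
\[
\frac{\partial}{\partial y_{j}}W_{\varepsilon,\xi(y)}(x)=\sum_{k=1}^{n-1}\Bigl[(\partial_{k}U_{\varepsilon})(\tilde{\mathcal{E}},\varepsilon z_{n})\,\chi_{R}+U_{\varepsilon}\,(\partial_{k}\chi_{R})(\tilde{\mathcal{E}},\varepsilon z_{n})\Bigr]\frac{\partial\tilde{\mathcal{E}}_{k}}{\partial y_{j}}(y,\varepsilon\bar{z}).
\]

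Finally I would evaluate at $y=0$. Since $\xi(0)=\xi_0$, we have $\tilde{\mathcal{E}}(0,\varepsilon\bar{z}) = \varepsilon\bar{z}$, so both $U_\varepsilon$ and $\chi_R$ are evaluated at the point $(\varepsilon\bar z,\varepsilon z_n)=\varepsilon z$. Using $U_{\varepsilon}(\varepsilon z)=U(z)$ and the scaling identity $(\partial_{k}U_{\varepsilon})(\varepsilon z)=\frac{1}{\varepsilon}(\partial_{k}U)(z)$, together with the convention that $\frac{\partial}{\partial z_k}\chi_R(\varepsilon z)$ denotes $(\partial_k\chi_R)(\varepsilon z)$ (consistent with the notation used for $U$), one obtains the claimed formula.

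There is no real obstacle here — the lemma is essentially a bookkeeping exercise. The only point requiring attention is not to accidentally introduce spurious $\varepsilon$ factors when applying the chain rule to $\chi_R(\varepsilon z)$: since $\chi_R$ in the statement is being read as a function on $\mathbb{R}^n$ evaluated at the point $\varepsilon z$ (rather than as the composition $z\mapsto \chi_R(\varepsilon z)$), the derivative appearing on the right-hand side is $(\partial_k \chi_R)(\varepsilon z)$ without a prefactor.
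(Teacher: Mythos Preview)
Your proof is correct and follows essentially the same approach as the paper: both write $W_{\varepsilon,\xi(y)}(x)$ via the transition map $\tilde{\mathcal{H}}$, apply the chain rule, and use that the $n$-th component of $\tilde{\mathcal{H}}$ is independent of $y$ to restrict the sum to $k=1,\dots,n-1$. The paper phrases this last point by invoking Lemma~\ref{lem:Estorto2}, whereas you use the factorization $\tilde{\mathcal{H}}=(\tilde{\mathcal{E}},\eta_n)$ from Definition~\ref{def:Estorto} directly --- but these are the same observation, and your remark on the intended reading of $\frac{\partial}{\partial z_k}\chi_R(\varepsilon z)$ is exactly right.
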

We need some preliminaries in order to prove of Lemma  \ref{lem:derWeps}.

\begin{lem}
\label{lem:Estorto}It holds
\begin{align*}
{\mathcal{E}}(0,\bar{\eta})= & \bar{\eta}\text{ for }\bar{\eta}\in\mathbb{R}^{n-1}\\
\frac{\partial\tilde{\mathcal{E}}_{k}}{\partial\eta_{j}}(0,\bar\eta)= & \delta_{jk}\text{ for }y\in\mathbb{R}^{n-1},\ j,k=1,\dots,n-1\\
\frac{\partial\tilde{\mathcal{E}}_{k}}{\partial y_{j}}(0,0)= & -\delta_{jk}\text{ for }j,k=1,\dots,n-1\\
\frac{\partial^{2}\tilde{\mathcal{E}}_{k}}{\partial y_{j}\partial\eta_{h}}(0,0)= & 0\text{ for }j,h,k=1,\dots,n-1
\end{align*}
\end{lem}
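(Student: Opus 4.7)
\emph{Plan of the proof.} I would handle the four identities in order, using only the definitions and the vanishing of the Christoffel symbols of $\partial M$ at $\xi_0$ in normal coordinates. For (1), setting $y=0$ makes $\xi(0)=\xi_0$, so by definition $\mathcal{E}(0,\bar\eta) = (\exp_{\xi_0}^\partial)^{-1}(\exp_{\xi_0}^\partial\bar\eta) = \bar\eta$. Differentiating componentwise in $\eta_j$ gives (2).

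For (3), setting $\bar\eta=0$ shows that $\tilde{\mathcal{E}}(y,0) = (\exp_{\xi(y)}^\partial)^{-1}(\xi_0)$ is the initial velocity at $\xi(y)$ of the geodesic back to $\xi_0$. Since this is the reversal of the radial geodesic $s\mapsto \exp_{\xi_0}^\partial(sy)$, its initial velocity equals the parallel transport of $-y$ from $\xi_0$ to $\xi(y)$. Taking the orthonormal frame at $\xi(y)$ to be parallel-transported from the one at $\xi_0$, this vector has coordinates $-y$, so $\tilde{\mathcal{E}}(y,0)=-y$ and differentiating yields (3).

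The crucial identity is (4), which I would prove by implicit differentiation of the defining relation $\exp_{\xi(y)}^\partial\bigl(\tilde{\mathcal{E}}(y,\bar\eta)\bigr) = \exp_{\xi_0}^\partial(\bar\eta)$. Introducing $E(z,v)$ as the normal-coordinate representation (at $\xi_0$) of $\exp_{\xi(z)}^\partial(v)$, this reads $E\bigl(y,\tilde{\mathcal{E}}(y,\bar\eta)\bigr)=\bar\eta$. Applying $\partial_{y_j}\partial_{\eta_h}$ and evaluating at $(0,0)$, the chain rule reduces everything to three facts in normal coordinates at $\xi_0$: (i) $\partial_v E(0,0)=\Id$, because $d\exp_{\xi_0}^\partial|_0=\Id$; (ii) $\partial^2_{vv}E(0,0)=0$, since $E(0,v)=v$ is linear in $v$; and (iii) $\partial^2_{zv}E(0,0)=0$. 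Combined with (2)--(3), these yield $\partial^2_{y_j\eta_h}\tilde{\mathcal{E}}_k(0,0)=0$.

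The main obstacle is fact (iii). It encodes the geometric input: if $e_b(z)$ denotes the frame at $\xi(z)$, written in coordinates as $e_b(z)=M^a_b(z)\,\partial_{\bar\eta^a}$, then $\partial_v E(z,0)=M(z)$, and the parallel-transport equation gives $\partial_{z_j} M^a_b(0)=-\Gamma^a_{jb}(0)=0$ since the Christoffel symbols of $\partial M$ at $\xi_0$ vanish in normal coordinates. Once this bookkeeping is in place, the rest is a mechanical chain-rule computation.
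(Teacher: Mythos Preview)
Your argument is correct and ultimately runs along the same rails as the paper's: both pivot on the auxiliary map you call $E(z,v)$, which the paper denotes $F(y,\bar\eta)=(\exp_{\xi_0}^\partial)^{-1}(\exp_{\xi(y)}^\partial\bar\eta)$. The difference is in packaging. For (2)--(3) the paper sets $\Gamma(y,\eta)=(y,F(y,\eta))$, observes $\Gamma^{-1}(y,\beta)=(y,\tilde{\mathcal E}(y,\beta))$, and reads off the derivatives of $\tilde{\mathcal E}$ from the block-triangular inverse of $\Gamma'$; since $F(0,\eta)=\eta$ and $F(y,0)=y$ (the latter being frame-independent, because $0\in T_{\xi(y)}\partial M$ is the zero vector in any frame), this yields $\tilde{\mathcal E}_\eta'(0,\cdot)=\Id$ and $\tilde{\mathcal E}_y'(0,0)=-\Id$ without any parallel-transport interpretation. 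Your geometric argument for (3) is also valid but uses more than is needed. For (4) the paper simply cites \cite[Lemma 6.4]{MP09}, so your explicit implicit-differentiation computation, isolating the vanishing of $\partial_{zv}^2E(0,0)$ as the geometric input, actually supplies the content the paper defers.

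One point you should make explicit: both your identity $\tilde{\mathcal E}(y,0)=-y$ and your fact~(iii) rely on the convention that the orthonormal frame at $\xi(y)$ used to define normal coordinates there is parallel-transported from $\xi_0$ along the radial geodesic. Identity~(4) is genuinely sensitive to this choice---replacing the frame by $O(y)e_b$ with $O(0)=\Id$ but $O'(0)\ne0$ would add a term $(\partial_{y_j}O(0))_{kh}$ to $\partial^2_{y_j\eta_h}\tilde{\mathcal E}_k(0,0)$---so the assumption is not cosmetic and should be stated up front.
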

\begin{proof}
We recall that $\tilde{\mathcal{E}}(y,\bar{\eta})=\left(\exp_{\xi(y)}^{\partial}\right)^{-1}(\exp_{\xi_{0}}^{\partial}\bar{\eta})$,
so the first claim is obvious. Let us introduce, for $y,\bar{\eta}\in B(0,R)\subset\mathbb{R}^{n-1}$
\begin{align*}
F(y,\bar{\eta}) & =\left(\exp_{\xi_{0}}^{\partial}\right)^{-1}\left(\exp_{\xi(y)}^{\partial}(\bar{\eta})\right)\\
\Gamma(y,\bar{\eta}) & =\left(y,F(y,\bar{\eta})\right).
\end{align*}
We notice that $\Gamma^{-1}(y,\beta),=(y,\tilde{\mathcal{E}}(y,\beta))$.
We can easily compute the derivative of $\Gamma$. Given  $\hat{y},\hat{\eta}\in \mathbb{R}^{n-1}$ we have
\[
\Gamma'(\hat{y},\hat{\eta})[{y},{\beta}]=\left(\begin{array}{cc}
\text{Id}_{\mathbb{R}^{n-1}} & 0\\
F_{y}'(\hat{y},\hat{\eta}) & F_{\eta}'(\hat{y},\hat{\eta})
\end{array}\right)\left(\begin{array}{c}
{y}\\
{\beta}
\end{array}\right),
\]
thus
\[
\left(\Gamma^{-1}\right)^{\prime}(\hat{y},\hat{\eta})[{y}, {\beta}]=\left(\begin{array}{cc}
\text{Id}_{\mathbb{R}^{n-1}} & 0\\
-\left(F_{\eta}'(\hat{y},\hat{\eta})\right)^{-1}F_{y}'(\hat{y},\hat{\eta}) & \left(F_{\eta}'(\hat{y},\hat{\eta})\right)^{-1}
\end{array}\right)\left(\begin{array}{c}
 {y}\\
 {\beta}
\end{array}\right)
\]
Here $y,\beta\in \mathbb{R}^{n-1}$. Now, by direct computation we have that
\[
F_{\eta}'(0,\hat{\eta})=\text{Id}_{\mathbb{R}^{n-1}}\text{ and }F_{y}'(\hat{y},0)=\text{Id}_{\mathbb{R}^{n-1}},
\]
so $\frac{\partial\tilde{\mathcal{E}}_{k}}{\partial\eta_{j}}(0,\hat\eta)=
\left(\left(F_{\eta}'(0,\hat{\eta})\right)^{-1}\right)_{jk}=\delta_{jk}$
and $\frac{\partial\tilde{\mathcal{E}}_{k}}{\partial y_{j}}(0,0)=
\left(-\left(F_{\eta}'(0,0)\right)^{-1}F_{y}'(0,0)\right)_{jk}=-\delta_{jk}$.
For the last claim we refer to \cite[Lemma 6.4]{MP09}\end{proof}
\begin{lem}
\label{lem:Estorto2}We have that 
\begin{align*}
\tilde{\mathcal{H}}(0,\bar{\eta},\eta_{n})= 
& (\bar{\eta},\eta_{n})\text{ for }\bar{\eta}\in\mathbb{R}^{n-1},\eta_{n}\in\mathbb{R}_{+}\\
\frac{\partial\tilde{\mathcal{H}}_{k}}{\partial y_{j}}(0,0,\eta_{n})= 
& -\delta_{jk}\text{ for }j,k=1,\dots,n-1,\eta_{n}\in\mathbb{R}_{+}\\
\frac{\partial\tilde{\mathcal{H}}_{n}}{\partial y_{j}}(y,\bar{\eta},\eta_{n})= 
& 0\text{ for }j=1,\dots,n-1,y,\bar{\eta}\in\mathbb{R}^{n-1},\eta_{n}\in\mathbb{R}_{+}\\
\frac{\partial\tilde{\mathcal{H}}_{k}}{\partial\eta_{n}}(y,\bar{\eta},\eta_{n})= 
& 0\text{ for }j,k=1,\dots,n-1,\bar{\eta}\in\mathbb{R}^{n-1},\eta_{n}\in\mathbb{R}_{+}\\
\frac{\partial^{2}\tilde{\mathcal{H}}_{k}}{\partial\eta_{n}\partial y_{j}}(y,\bar{\eta},\eta_{n})= 
& 0\text{ for }j,k=1,\dots,n-1,\bar{\eta}\in\mathbb{R}^{n-1},\eta_{n}\in\mathbb{R}_{+}
\end{align*}
\end{lem}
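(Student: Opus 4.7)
The plan is to leverage the explicit product structure of $\tilde{\mathcal{H}}$ that is already built into Definition~\ref{def:Estorto} and reduce everything to Lemma~\ref{lem:Estorto}. The key observation is that Fermi coordinates split into a ``boundary part'' and a ``distance to boundary part,'' and both are compatible across changes of base point on $\partial M$: the point $x=\psi_{\xi_{0}}^{\partial}(\bar{\eta},\eta_{n})$ has nearest-boundary projection $\bar{x}=\exp_{\xi_{0}}^{\partial}(\bar{\eta})$ and satisfies $d_{g}(x,\partial M)=\eta_{n}$, and these two pieces of data are read by the chart $(\psi_{\xi(y)}^{\partial})^{-1}$ independently. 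Thus
\[
\tilde{\mathcal{H}}(y,\bar{\eta},\eta_{n}) = \bigl((\exp_{\xi(y)}^{\partial})^{-1}(\bar{x}),\,\eta_{n}\bigr) = \bigl(\tilde{\mathcal{E}}(y,\bar{\eta}),\,\eta_{n}\bigr),
\]
which is the identity already recorded in the definition. I would begin by restating this splitting explicitly, so that all subsequent identities reduce to assertions about $\tilde{\mathcal{E}}$ or about the trivial coordinate $\eta_n$.

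Granted the splitting, the five claims follow almost mechanically. First, $\tilde{\mathcal{H}}(0,\bar{\eta},\eta_{n})=(\tilde{\mathcal{E}}(0,\bar{\eta}),\eta_{n})=(\bar{\eta},\eta_{n})$ using the first line of Lemma~\ref{lem:Estorto}. For the first $n-1$ components with respect to $y_{j}$, I use
\[
\frac{\partial \tilde{\mathcal{H}}_{k}}{\partial y_{j}}(y,\bar{\eta},\eta_{n}) = \frac{\partial \tilde{\mathcal{E}}_{k}}{\partial y_{j}}(y,\bar{\eta}),
\]
so evaluating at $(y,\bar{\eta})=(0,0)$ and applying the third line of Lemma~\ref{lem:Estorto} gives $-\delta_{jk}$ for every $\eta_{n}$. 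The component $\tilde{\mathcal{H}}_{n}\equiv \eta_{n}$ is independent of $y$, producing the third claim. The fourth and fifth claims are even easier: since $\tilde{\mathcal{H}}_{k}=\tilde{\mathcal{E}}_{k}(y,\bar{\eta})$ for $k\le n-1$ does not involve $\eta_{n}$ at all, any derivative in $\eta_{n}$ (or mixed derivative with any $y_{j}$) vanishes identically.

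There is no genuine obstacle here; the only point that requires care is justifying the product decomposition of $\tilde{\mathcal{H}}$. This relies on the fact that, within the injectivity radius, the nearest-point projection $M\supset (\partial M)_R\to\partial M$ is well defined and intrinsic, so it does not depend on the choice of base point ($\xi_{0}$ vs.\ $\xi(y)$) used to set up Fermi coordinates; and that the normal coordinate $y_{n}=d_{g}(\cdot,\partial M)$ is likewise intrinsic. Once this is noted, the rest is a direct quotation of Lemma~\ref{lem:Estorto} together with the trivial observation that partial derivatives in a variable that does not appear are zero.
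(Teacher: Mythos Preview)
Your proof is correct and follows essentially the same approach as the paper: both rely on the splitting $\tilde{\mathcal{H}}(y,\bar\eta,\eta_n)=(\tilde{\mathcal{E}}(y,\bar\eta),\eta_n)$ recorded in Definition~\ref{def:Estorto}, reduce the first three claims to Lemma~\ref{lem:Estorto}, and obtain the last two by noting that $\tilde{\mathcal{H}}_k=\tilde{\mathcal{E}}_k(y,\bar\eta)$ is independent of $\eta_n$. Your additional remark on why the product decomposition is legitimate (intrinsic nature of the nearest-point projection and of $d_g(\cdot,\partial M)$) is a welcome clarification but not a different argument.
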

\begin{proof}
The first three claim follows immediately by Definition \ref{def:Estorto}
and Lemma \ref{lem:Estorto}. For the last two claims, observe that
$\tilde{\mathcal{H}}_{k}(y,\bar{\eta},\eta_{n})=\tilde{\mathcal{E}}_{k}(y,\bar{\eta})$
which does not depends on $\eta_{n}$ as well as its derivatives.
\end{proof}

We now prove the claimed result.
\begin{proof}[Proof of Lemma \ref{lem:derWeps}.]
By definition \ref{def:Estorto}, set $x=\psi_{\xi_{0}}^{\partial}(\eta)=\psi_{\xi_{0}}^{\partial}(\bar{\eta},\eta_{n})$ with 
$\eta=(\bar\eta,\eta_n)\in \mathbb{R}^n$,
and $\xi(y)=\exp_{\xi_{0}}^{\partial}(y)$ where $ y\in \mathbb{R}^{n-1}$, we have 
that 
$$W_{\varepsilon,\xi(y)}(x)=U\left(\frac{\tilde{\mathcal{H}}(y,\eta)}{\varepsilon}\right)\chi_{R}(\tilde{\mathcal{H}}(y,\eta)).$$
Fixed $j$, by Lemma \ref{lem:Estorto2}, 
\begin{align*}
\left.\frac{\partial}{\partial y_{j}}W_{\varepsilon,\xi(y)}(x)\right|_{y=0} &
 =\sum_{k=1}^{n}
 \left.
 \frac{\partial}{\partial v_{k}}\left[\chi_{R}(\tilde{\mathcal{H}}(y,\eta))
 U_{\varepsilon}(\tilde{\mathcal{H}}(y,\eta))\right]
 \right|_{\tilde{\mathcal{H}}(0,\eta)}
 \left.\frac{\partial}{\partial y_{i}}\tilde{\mathcal{H}_{k}}(y,\eta)\right|_{y=0}\\
 & =\sum_{k=1}^{n-1}\frac{\partial}{\partial\eta_{k}}\left[\chi_{R}(\eta)U_{\varepsilon}(\eta)\right]\left.\frac{\partial}{\partial y_{j}}\tilde{\mathcal{E}_{k}}(y,\bar{\eta})\right|_{y=0}\\
 &
  =\sum_{k=1}^{n-1}\frac{\partial}{\partial z_{k}}\left[\chi_{R}(\varepsilon z)U(z)\right]\left.\frac{\partial}{\partial y_{j}}
  \tilde{\mathcal{E}_{k}}(y,\varepsilon\bar{z})\right|_{y=0}
\end{align*}
Because $\tilde{\mathcal{H}_{k}}(y,\bar\eta,\eta_n)= {\mathcal{E}_{k}}(y,\bar{\eta})$ for $k=1,\dots, n-1$. Using the
 change of variables $\eta=\varepsilon z=(\varepsilon \bar z,\varepsilon z_n)$, we get the claim.
\end{proof}

\section{\label{sec:Reduction}Reduction to finite dimensional space}

In this section we find a solution for equation (\ref{eq:red1}).
In particular, we prove that for all $\varepsilon>0$ and for all
$\xi\in\partial M$ there exists $\phi_{\varepsilon,\xi}\in K_{\varepsilon,\xi}^{\bot}$
solving (\ref{eq:red1}). Here and in the hereafter, all the proof
are similar to \cite{MP09}. So, for the sake of simplicity, we will
underline the parts where differences appear, and sketch the remains
of the proofs (we will provide precise references for each proof).

We introduce the linear operator $L_{\varepsilon,\xi}:K_{\varepsilon,\xi}^{\bot}\rightarrow K_{\varepsilon,\xi}^{\bot}$
\[
L_{\varepsilon,\xi}(\phi):=\Pi_{\varepsilon,\xi}^{\bot}\left\{ \phi-i_{\varepsilon}^{*}\left[f'(W_{\varepsilon,\xi})\phi\right]\right\} 
\]
thus we can rewrite equation (\ref{eq:red1}) as 
\[
L_{\varepsilon,\xi}(\phi)=N_{\varepsilon,\xi}(\phi)+R_{\varepsilon,\xi}
\]
 where $N_{\varepsilon,\xi}(\phi)$ is the nonlinear term 
\[
N_{\varepsilon,\xi}:=\Pi_{\varepsilon,\xi}^{\bot}\left\{ i_{\varepsilon}^{*}\left[f(W_{\varepsilon,\xi}+\phi)-f(W_{\varepsilon,\xi})-f'(W_{\varepsilon,\xi})\phi\right]\right\} 
\]
and $R_{\varepsilon,\xi}$ is a remainder term
\[
R_{\varepsilon,\xi}:=\Pi_{\varepsilon,\xi}^{\bot}\left\{ i_{\varepsilon}^{*}\left[f(W_{\varepsilon,\xi})\right]-W_{\varepsilon,\xi}\right\} .
\]
The first step is to prove that the linear term is invertible.
\begin{lem}
\label{lem:Linv}There exist $\varepsilon_{0}$ and $c>0$ such that,
for any $\xi\in\partial M$ and $\varepsilon\in(0,\varepsilon_{0})$
\[
\|L_{\varepsilon,\xi}\|_{\varepsilon}\geq c\|\phi\|_{\varepsilon}\text{ for any }\phi\in K_{\varepsilon,\xi}^{\bot}.
\]
\end{lem}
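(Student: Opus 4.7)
The plan is the standard contradiction argument used in Lyapunov--Schmidt reductions with blow-up analysis. I would assume, for contradiction, that there exist sequences $\varepsilon_n\to 0$, $\xi_n\in\partial M$, and $\phi_n\in K_{\varepsilon_n,\xi_n}^{\bot}$ with $\|\phi_n\|_{\varepsilon_n}=1$ such that $\|L_{\varepsilon_n,\xi_n}(\phi_n)\|_{\varepsilon_n}\to 0$. Since $\phi_n\in K_{\varepsilon_n,\xi_n}^{\bot}$, the identity $L_{\varepsilon_n,\xi_n}(\phi_n)=\phi_n-i_{\varepsilon_n}^{*}[f'(W_{\varepsilon_n,\xi_n})\phi_n]+\Pi_{\varepsilon_n,\xi_n}i_{\varepsilon_n}^{*}[f'(W_{\varepsilon_n,\xi_n})\phi_n]$ holds, so one obtains an equivalent equation for $\phi_n$ modulo the finite dimensional projection. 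By compactness of $\partial M$ I would pass to a subsequence so that $\xi_n\to\xi_0\in\partial M$.

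The key step is to rescale and pass to the limit on the half space. Using Fermi coordinates $\psi_{\xi_n}^{\partial}$, define $\tilde{\phi}_n(z):=\phi_n(\psi_{\xi_n}^{\partial}(\varepsilon_n z))\chi_R(\varepsilon_n z)$ on $D^{+}(R/\varepsilon_n)$ and zero outside. The normalization $\|\phi_n\|_{\varepsilon_n}=1$, together with the asymptotic expansions $(\ref{eq:g1})$--$(\ref{eq:g3})$ for the metric coefficients, makes $\{\tilde{\phi}_n\}$ bounded in $H^{1}(\mathbb{R}^{n}_{+})$. I would extract a weak limit $\tilde{\phi}_\infty\in H^{1}(\mathbb{R}^{n}_{+})$ and test the rescaled equation against any $\psi\in C_c^\infty(\overline{\mathbb{R}^{n}_{+}})$. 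Since $W_{\varepsilon_n,\xi_n}$ pulled back and rescaled converges locally uniformly to $U$ and the flat metric replaces $g$ in the limit, the limit function $\tilde{\phi}_\infty$ solves the linearized problem $(\ref{eq:linear})$ with Neumann condition on $\{z_n=0\}$. By the lemma classifying the kernel, $\tilde{\phi}_\infty=\sum_{i=1}^{n-1}\alpha_i\,\partial U/\partial y_i$. To conclude $\tilde{\phi}_\infty=0$, I would exploit the orthogonality condition $\langle\phi_n,Z_{\varepsilon_n,\xi_n}^{i}\rangle_{\varepsilon_n}=0$: after the change of variables, and using Lemma \ref{lem:derWeps} together with Lemma \ref{lem:Estorto2} (which gives $\partial\tilde{\mathcal{E}}_k/\partial y_j(0,0)=-\delta_{jk}$), this orthogonality passes to the limit and reads $\int_{\mathbb{R}^{n}_{+}}\nabla\tilde{\phi}_\infty\cdot\nabla(\partial U/\partial y_i)+\tilde{\phi}_\infty(\partial U/\partial y_i)=0$ for each $i=1,\dots,n-1$, forcing each $\alpha_i=0$.

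Once $\tilde{\phi}_\infty\equiv 0$, I would argue strong convergence of $\phi_n$ to $0$ in $H_{\varepsilon_n}$, contradicting $\|\phi_n\|_{\varepsilon_n}=1$. The argument splits $M$ into a neighborhood $I_{\xi_n}(R)$ of the peak and its complement. Inside $I_{\xi_n}(R)$, the rescaled $\tilde{\phi}_n$ converges to $0$ weakly in $H^1$, and since $f'(W_{\varepsilon_n,\xi_n})$ localizes (due to the exponential decay of $U$) and the embedding $H_{\varepsilon_n}\hookrightarrow L^p_{\varepsilon_n}$ is compact on the rescaled scale, one gets $i_{\varepsilon_n}^{*}[f'(W_{\varepsilon_n,\xi_n})\phi_n]\to 0$ strongly; testing the relation $L_{\varepsilon_n,\xi_n}(\phi_n)=\phi_n-i_{\varepsilon_n}^{*}[f'(W_{\varepsilon_n,\xi_n})\phi_n]+\Pi_{\varepsilon_n,\xi_n}i_{\varepsilon_n}^{*}[f'(W_{\varepsilon_n,\xi_n})\phi_n]$ against $\phi_n$ itself then forces $\|\phi_n\|_{\varepsilon_n}\to 0$. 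Outside $I_{\xi_n}(R)$ the term $f'(W_{\varepsilon_n,\xi_n})$ is exponentially small, so the contribution to the norm is negligible.

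The main obstacle I anticipate is the careful bookkeeping of the projection $\Pi_{\varepsilon_n,\xi_n}i_{\varepsilon_n}^{*}[f'(W_{\varepsilon_n,\xi_n})\phi_n]$: one needs that this projection is indeed negligible in the $H_{\varepsilon_n}$-norm, which requires uniform bounds on $\|Z_{\varepsilon_n,\xi_n}^{i}\|_{\varepsilon_n}$ and the estimate $\langle i_{\varepsilon_n}^{*}[f'(W_{\varepsilon_n,\xi_n})\phi_n],Z_{\varepsilon_n,\xi_n}^{i}\rangle_{\varepsilon_n}=o(1)$, which in turn relies on the orthogonality of $\phi_n$ and the fact that the kernel element $\varphi^i$ already solves the linearized equation on the half space. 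The secondary difficulty is the geometric one, namely verifying that the remainder terms coming from the expansion of the metric $g$ in Fermi coordinates are truly lower order after rescaling; this is handled by $(\ref{eq:g1})$--$(\ref{eq:g3})$ and the fact that $\varepsilon_n z$ stays bounded when integrating against compactly supported test functions.
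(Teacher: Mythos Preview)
Your proposal is correct and follows essentially the same blow-up contradiction argument as the paper: assume a normalized sequence with $\|L_{\varepsilon_n,\xi_n}\phi_n\|_{\varepsilon_n}\to 0$, rescale in Fermi coordinates, pass to a weak limit solving the linearized half-space problem, use the orthogonality to kill the kernel, and derive the contradiction from $\int f'(U)\tilde\phi_n^2\to 0$ versus the energy identity. The only organizational difference is that the paper first isolates the kernel component $\zeta_k\in K_{\varepsilon_k,\xi_k}$ and shows $\|\zeta_k\|_{\varepsilon_k}\to 0$ before introducing $u_k=\phi_k-\psi_k-\zeta_k$ (which then satisfies a clean PDE), whereas you work directly with $\phi_n$; also, your invocation of Lemmas~\ref{lem:derWeps} and~\ref{lem:Estorto2} for the passage of orthogonality is unnecessary, since $Z^i_{\varepsilon,\xi}$ is defined directly from $\varphi^i$ and the limit $\langle\tilde\phi_\infty,\varphi^i\rangle_{H^1}=0$ follows by straightforward rescaling.
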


The proof of this Lemma is postponed to the Appendix. We estimate now the remainder term $R_{\varepsilon,\xi}$.
\begin{lem}
\label{lem:Reps}There exists $\varepsilon_{0}>0$ and $c>0$ such
that for any $\xi\in\partial M$ and for all $\varepsilon\in(0,\varepsilon_{0})$
it holds 
\[
\|R_{\varepsilon,\xi}\|_{\varepsilon}\le c\varepsilon^{1+\frac{n}{p'}}.
\]
\end{lem}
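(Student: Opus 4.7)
The plan is to reduce the $H_\varepsilon$-norm bound to an $L^{p'}_\varepsilon$-bound on the defect in the equation, and then to estimate that norm in Fermi coordinates at $\xi$. Since $\Pi_{\varepsilon,\xi}^\bot$ is an orthogonal projection,
\[
\|R_{\varepsilon,\xi}\|_\varepsilon \leq \|i_\varepsilon^*[f(W_{\varepsilon,\xi})]-W_{\varepsilon,\xi}\|_\varepsilon.
\]
The first step would be to rewrite $W_{\varepsilon,\xi}=i_\varepsilon^*[-\varepsilon^2\Delta_g W_{\varepsilon,\xi}+W_{\varepsilon,\xi}]$. This identity follows by integration by parts in the definition of $\langle W_{\varepsilon,\xi},\varphi\rangle_\varepsilon$, once one verifies that the boundary contribution vanishes: in Fermi coordinates $W_{\varepsilon,\xi}(x)=U(y/\varepsilon)\chi_R(y)$, so $\partial_{y_n}W_{\varepsilon,\xi}|_{y_n=0}$ splits into $\frac{1}{\varepsilon}\partial_{z_n}U(\cdot,0)\chi_R(\cdot,0)$, which vanishes by the even-reflection construction of $U$ from $V$, and $U(\cdot,0)\partial_{y_n}\chi_R(\cdot,0)$, which we can arrange to be zero by a symmetric choice of cut-off. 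Applying Remark \ref{rem:ieps} then yields
\[
\|R_{\varepsilon,\xi}\|_\varepsilon \leq c\,\bigl|f(W_{\varepsilon,\xi})+\varepsilon^2\Delta_g W_{\varepsilon,\xi}-W_{\varepsilon,\xi}\bigr|_{\varepsilon,p'}.
\]

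The heart of the argument is the estimation of this $L^{p'}_\varepsilon$-norm. I would work in Fermi coordinates at $\xi$ and rescale $y=\varepsilon z$, with $z\in D^+(R/\varepsilon)$. In the core region where $\chi_R(\varepsilon z)\equiv 1$ and $W_{\varepsilon,\xi}(\varepsilon z)=U(z)$, I combine the expansions (\ref{eq:g1})-(\ref{eq:g3}) of $g^{ij}$ and $\sqrt g$ with the equation $-\Delta U+U=U^{p-1}$ from (\ref{eq:P-Rn}). The leading Euclidean Laplacian cancels and what survives is a pointwise defect
\[
\bigl|f(W_{\varepsilon,\xi})+\varepsilon^2\Delta_g W_{\varepsilon,\xi}-W_{\varepsilon,\xi}\bigr|(\varepsilon z)\leq C\varepsilon\bigl(1+|z|\bigr)\bigl(|\nabla U(z)|+|\nabla^2 U(z)|\bigr),
\]
with coefficients built from $h_{ij}(0)$ and $H(0)$. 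In the transition region where $\chi_R$ varies, every surviving term carries $U$, $\nabla U$ or $\nabla^2 U$ evaluated at a point of size $1/\varepsilon$, hence is exponentially small in $1/\varepsilon$.

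Finally I would insert these pointwise bounds into
\[
|v|_{\varepsilon,p'}^{p'}=\frac{1}{\varepsilon^n}\int_M|v|^{p'}\,d\mu_g,
\]
perform the change of variable $y=\varepsilon z$ whose Jacobian $\varepsilon^n\sqrt{g(\varepsilon z)}$ cancels the $\varepsilon^{-n}$ prefactor in the norm, and bound the resulting integral over $D^+(R/\varepsilon)$ by a constant using the exponential decay of $U$ and its derivatives; taking the $p'$-th root yields the claimed estimate. The main obstacle will be the careful bookkeeping of the metric expansion: identifying precisely the first-order $\varepsilon$-corrections (the $2h_{ij}(0)z_n U_{ij}(z)$ piece of $g^{ij}(\varepsilon z)U_{ij}$ and the $(n-1)H(0)z_n$ piece from $\sqrt g$), and verifying that the polynomial $z$-weights they introduce are absorbed by the exponential decay of $U$ and its derivatives, an argument parallel to the corresponding closed-manifold estimate of \cite{MP09}.
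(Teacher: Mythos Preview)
Your proposal is correct and follows essentially the same route as the paper: reduce to $\bigl|f(W_{\varepsilon,\xi})-(-\varepsilon^2\Delta_g W_{\varepsilon,\xi}+W_{\varepsilon,\xi})\bigr|_{p',\varepsilon}$ via Remark~\ref{rem:ieps}, pass to Fermi coordinates, and estimate the defect using the metric expansions (\ref{eq:g1})--(\ref{eq:g3}) together with the exponential decay of $U$; the paper's only organizational difference is that it writes the Laplace--Beltrami operator explicitly as $-\Delta+(g^{ij}-\delta_{ij})\partial_{ij}-g^{ij}\Gamma_{ij}^k\partial_k$ and splits the resulting $L^{p'}$ integral into five separate pieces (cutoff errors, $(g^{ij}-\delta_{ij})$ term, Christoffel term) rather than giving your unified pointwise bound over core and transition regions. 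Your explicit verification that $\partial_\nu W_{\varepsilon,\xi}=0$ on $\partial M$---needed for the identity $W_{\varepsilon,\xi}=i_\varepsilon^*\bigl[-\varepsilon^2\Delta_g W_{\varepsilon,\xi}+W_{\varepsilon,\xi}\bigr]$---is a detail the paper passes over silently.
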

\begin{proof}
We proceed as in \cite[Lemma 3.3]{MP09}. We define on $M$ the function
$V_{\varepsilon,\xi}$ such that $W_{\varepsilon,\xi}=i_{\varepsilon}^{*}(V_{\varepsilon,\xi})$,
thus $-\varepsilon^{2}\Delta_{g}W_{\varepsilon,\xi}+W_{\varepsilon,\xi}=V_{\varepsilon,\xi}$. 

It is well known%
\footnote{\cite[page134]{Mor}%
}, by definition of Laplace-Beltrami operator, that in a local chart
it holds 
\[
-\Delta v=-\Delta_{g}v+(g_{\xi}^{ij}-\delta_{ij})\frac{\partial^{2}}{\partial x_{i}\partial x_{j}}v-g_{\xi}^{ij}\Gamma_{ij}^{k}\frac{\partial}{\partial x_{k}}v
\]
where $\Delta$ is the euclidean Laplace operator. Thus, defined 
\[
\tilde{V}_{\varepsilon,\xi}(y)=V_{\varepsilon,\xi}\left(\psi_{\xi}^{\partial}(y)\right),\ y\in D^{+}(R)
\]
we have 
\begin{eqnarray}
\tilde{V}_{\varepsilon,\xi}(y) & = & -\varepsilon^{2}\Delta_{g}(U_{\varepsilon}\chi_{R})+U_{\varepsilon}\chi_{R}=\nonumber \\
 & = & U_{\varepsilon}^{p-1}\chi_{R}-\varepsilon^{2}U_{\varepsilon}\Delta\chi_{R}-2\varepsilon^{2}\nabla U_{\varepsilon}\nabla\chi_{R}\label{eq:Veps}\\
 &  & -\varepsilon^{2}(g_{\xi}^{ij}-\delta_{ij})\frac{\partial^{2}}{\partial y_{i}\partial y_{j}}(U_{\varepsilon}\chi_{R})+\varepsilon^{2}g_{\xi}^{ij}\Gamma_{ij}^{k}\frac{\partial}{\partial y_{k}}(U_{\varepsilon}\chi_{R})\nonumber 
\end{eqnarray}
Also, we remind that, by Remark \ref{rem:ieps} and by definition
of $R_{\varepsilon,\xi}$, it holds
\[
\|R_{\varepsilon,\xi}\|_{\varepsilon}\le\|i_{\varepsilon}^{*}f(W_{\varepsilon,\xi})-W_{\varepsilon,\xi}\|_{\varepsilon}\le c\left|W_{\varepsilon,\xi}^{p-1}-V_{\varepsilon,\xi}\right|_{p',\varepsilon}.
\]
Finally, by definition of $W_{\varepsilon,\xi}$ and by (\ref{eq:Veps})
we get
\begin{align*}
\left|W_{\varepsilon,\xi}^{p-1}-V_{\varepsilon,\xi}\right|_{p',\varepsilon}^{p'}= & \int_{D^{+}(R)}\left|U_{\varepsilon}^{p-1}(y)\chi_{R}^{p-1}(y)-\tilde{V}_{\varepsilon,\xi}(y)\right|^{p'}|g_{\xi}(y)|^{1/2}dy\\
\le & c\int_{D^{+}(R)}\left|U_{\varepsilon}^{p-1}(y)\left(\chi_{R}^{p-1}(y)-\chi_{R}(y)\right)\right|^{p'}dy\\
 & +c\varepsilon^{2p'}\int_{D^{+}(R)}U_{\varepsilon}^{p'}\left|\Delta\chi_{R}\right|^{p'}dy+c\varepsilon^{2p'}\int_{D^{+}(R)}\left|\nabla U_{\varepsilon}\cdot\nabla\chi_{R}\right|^{p'}dy\\
 & +c\varepsilon^{2p'}\int_{D^{+}(R)}\left|g_{\xi}^{ij}(y)-\delta_{ij}\right|^{p'}\left|\frac{\partial^{2}}{\partial y_{i}\partial y_{j}}(U_{\varepsilon}\chi_{R})(y)\right|^{p'}dy\\
 & +c\varepsilon^{2p'}\int_{D^{+}(R)}\left|g_{\xi}^{ij}(y)\Gamma_{ij}^{k}(y)\right|^{p'}\left|\frac{\partial}{\partial y_{k}}(U_{\varepsilon}\chi_{R})(y)\right|^{p'}dy
\end{align*}
By exponential decay and by definition of $\chi_{r}$, using (\ref{eq:g1})
and (\ref{eq:g2}) we have 
\begin{multline*}
\varepsilon^{2p'}\int_{D^{+}(R)}\left|g_{\xi}^{ij}(y)-\delta_{ij}\right|^{p'}\left|\frac{\partial^{2}}{\partial y_{i}\partial y_{j}}(U_{\varepsilon}\chi_{R})(y)\right|^{p'}dy\\
=\varepsilon^{2p'}\int_{D^{+}(R)}\left|g_{\xi}^{ij}(y)-\delta_{ij}\right|^{p'}\left|\frac{\partial^{2}}{\partial y_{i}\partial y_{j}}U_{\varepsilon}(y)\right|^{p'}dy+O(\varepsilon^{n+p'})\\
\le\varepsilon^{n}\int_{\mathbb{R}_{+}^{n}}\left|g_{\xi}^{ij}(\varepsilon z)-\delta_{ij}\right|^{p'}\left|\frac{\partial^{2}}{\partial z_{i}\partial z_{j}}U(z)\right|^{p'}dz+O(\varepsilon^{n+p'})\\
\le\varepsilon^{n+p'}\int_{\mathbb{R}_{+}^{n}}\left|\frac{\partial^{2}}{\partial z_{i}\partial z_{j}}U(z)\right|^{p'}dz+O(\varepsilon^{n+p'})=O(\varepsilon^{n+p'}).
\end{multline*}
The other terms can be estimate in a similar way.
\end{proof}
By fixed point theorem and by implicit function theorem we can solve
equation (\ref{eq:red1}). 
\begin{prop}
\label{prop:phieps}There exists $\varepsilon_{0}>0$ and $c>0$ such
that for any $\xi\in\partial M$ and for all $\varepsilon\in(0,\varepsilon_{0})$
there exists a unique $\phi_{\varepsilon,\xi}=\phi(\varepsilon,\xi)\in K_{\varepsilon,\xi}^{\bot}$
which solves (\ref{eq:red1}). Moreover
\[
\|\phi_{\varepsilon,\xi}\|_{\varepsilon}<c\varepsilon^{1+\frac{n}{p'}}.
\]
Finally, $\xi\mapsto\phi_{\varepsilon,\xi}$ is a $C^{1}$ map.\end{prop}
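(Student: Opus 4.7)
The plan is to recast (\ref{eq:red1}) as a fixed point equation in $K_{\varepsilon,\xi}^\bot$ and solve it by the Banach contraction principle, then extract the $C^1$ dependence on $\xi$ from the implicit function theorem. By Lemma \ref{lem:Linv}, $L_{\varepsilon,\xi}$ is invertible with $\|L_{\varepsilon,\xi}^{-1}\|$ bounded uniformly in $\xi\in\partial M$ and $\varepsilon\in(0,\varepsilon_0)$, so (\ref{eq:red1}) is equivalent to the fixed point equation
\[
\phi = T_{\varepsilon,\xi}(\phi) := L_{\varepsilon,\xi}^{-1}\bigl(N_{\varepsilon,\xi}(\phi)+R_{\varepsilon,\xi}\bigr).
\]
I would show that for a large constant $\kappa$ and $\rho=\kappa\varepsilon^{1+n/p'}$, the map $T_{\varepsilon,\xi}$ sends the closed ball $B_\rho\subset K_{\varepsilon,\xi}^\bot$ into itself and is a strict contraction there.

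The decisive step is the control of the nonlinear term. Via Remark \ref{rem:ieps} this reduces to bounding $|f(W_{\varepsilon,\xi}+\phi)-f(W_{\varepsilon,\xi})-f'(W_{\varepsilon,\xi})\phi|_{p',\varepsilon}$ and its Lipschitz analogue in two arguments. Using a pointwise estimate of the form $|f(a+s)-f(a)-f'(a)s|\le C(|s|^{\min(p-1,2)}+|s|^{p-1})$, H\"older's inequality in $L_\varepsilon^p$, and the uniform Sobolev embedding $|u|_{\varepsilon,p}\le c\|u\|_\varepsilon$, I expect to obtain $\|N_{\varepsilon,\xi}(\phi)\|_\varepsilon\le C(\|\phi\|_\varepsilon^{\min(p-1,2)}+\|\phi\|_\varepsilon^{p-1})$ and a corresponding small Lipschitz constant on $B_\rho$. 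Combined with the remainder bound $\|R_{\varepsilon,\xi}\|_\varepsilon\le c\varepsilon^{1+n/p'}$ from Lemma \ref{lem:Reps}, this produces the contraction, hence the unique fixed point $\phi_{\varepsilon,\xi}\in B_\rho$ with the stated norm estimate.

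For the $C^1$ regularity of $\xi\mapsto\phi_{\varepsilon,\xi}$, I would apply the implicit function theorem to
\[
F(\xi,\phi):=\Pi_{\varepsilon,\xi}^\bot\bigl\{W_{\varepsilon,\xi}+\phi-i_\varepsilon^*[f(W_{\varepsilon,\xi}+\phi)]\bigr\}
\]
at $(\xi,\phi_{\varepsilon,\xi})$. Its derivative in $\phi$ differs from $L_{\varepsilon,\xi}$ by $\Pi_{\varepsilon,\xi}^\bot i_\varepsilon^*\bigl[(f'(W_{\varepsilon,\xi}+\phi_{\varepsilon,\xi})-f'(W_{\varepsilon,\xi}))\cdot\bigr]$, which is $o(1)$ since $\phi_{\varepsilon,\xi}$ is small, so the derivative is still invertible. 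Smoothness of $F$ in $\xi$ is guaranteed by the smooth dependence of $W_{\varepsilon,\xi}$, $Z_{\varepsilon,\xi}^i$ and the projection $\Pi_{\varepsilon,\xi}^\bot$ on $\xi$; working in Fermi coordinates centred at a reference point and invoking Lemma \ref{lem:derWeps} gives the explicit derivatives needed.

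The main obstacle I anticipate is the nonlinear Nemytski estimate in the regime $2<p<3$, where $f'$ is only H\"older continuous of exponent $p-2<1$. In that range the quadratic bound $|f(a+s)-f(a)-f'(a)s|\le C|s|^2$ is not available, so one must use the sub-quadratic bound with exponent $p-1$; this forces the precise scaling $\rho\sim\varepsilon^{1+n/p'}$ in order to balance against the remainder. All the $\varepsilon$-weighted norms $\|\cdot\|_\varepsilon$ and $|\cdot|_{p',\varepsilon}$ must be tracked carefully throughout this computation, and the uniformity of the implicit function theorem in the parameter $\varepsilon$ has to be checked via the uniform bounds already established in Lemmas \ref{lem:Linv} and \ref{lem:Reps}.
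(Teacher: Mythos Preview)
Your approach is essentially the paper's: recast (\ref{eq:red1}) as the fixed point problem $\phi=T_{\varepsilon,\xi}(\phi)=L_{\varepsilon,\xi}^{-1}(N_{\varepsilon,\xi}(\phi)+R_{\varepsilon,\xi})$, use Lemma~\ref{lem:Linv} for the uniform bound on $L_{\varepsilon,\xi}^{-1}$, the superlinear estimate on $N_{\varepsilon,\xi}$, and Lemma~\ref{lem:Reps} for the remainder, then contract on a ball of radius $\sim\varepsilon^{1+n/p'}$. Your treatment of the nonlinear term, with the exponent $\min(p-1,2)$ to cover $2<p<3$, is exactly the refinement the paper records via \cite[Remark~3.4]{MP09}.

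There is one technical point in your implicit function theorem argument that the paper handles differently and that you should address. Your map $F(\xi,\phi)=\Pi_{\varepsilon,\xi}^{\bot}\{W_{\varepsilon,\xi}+\phi-i_{\varepsilon}^{*}[f(W_{\varepsilon,\xi}+\phi)]\}$ is posed with $\phi\in K_{\varepsilon,\xi}^{\bot}$ and takes values in $K_{\varepsilon,\xi}^{\bot}$; both domain and range depend on $\xi$, so you cannot apply the implicit function theorem directly in a fixed Banach space. The paper's fix is to work on the full space $H_{\varepsilon}$ by defining
\[
G(\xi,u)=\Pi_{\varepsilon,\xi}^{\bot}\bigl\{W_{\varepsilon,\xi}+\Pi_{\varepsilon,\xi}^{\bot}u-i_{\varepsilon}^{*}[f(W_{\varepsilon,\xi}+\Pi_{\varepsilon,\xi}^{\bot}u)]\bigr\}+\Pi_{\varepsilon,\xi}u,
\]
so that $G:\partial M\times H_{\varepsilon}\to H_{\varepsilon}$ acts between fixed spaces, $G(\xi,u)=0$ forces $u\in K_{\varepsilon,\xi}^{\bot}$ (from the $\Pi_{\varepsilon,\xi}u$ term) and then reproduces (\ref{eq:red1}), and $\partial_{u}G(\xi,\phi_{\varepsilon,\xi})$ is invertible on all of $H_{\varepsilon}$. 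With this modification your argument goes through; everything else matches the paper.
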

\begin{proof}
The proof is similar to Proposition 3.5 of \cite{MP09}, which we
refer to for all details. We want to solve (\ref{eq:red1}) by a fixed
point argument. We define the operator
\begin{eqnarray*}
T_{\varepsilon,\xi} & : & K_{\varepsilon,\xi}^{\bot}\rightarrow K_{\varepsilon,\xi}^{\bot}\\
T_{\varepsilon,\xi}(\phi) & = & L_{\varepsilon,\xi}^{-1}\left(N_{\varepsilon,\xi}(\phi)+R_{\varepsilon,\xi}\right)
\end{eqnarray*}
By Lemma \ref{lem:Linv}, $T_{\varepsilon,\xi}$ is well defined and
it holds
\begin{eqnarray*}
\|T_{\varepsilon,\xi}(\phi)\|_{\varepsilon} & \le & c\left(\|N_{\varepsilon,\xi}(\phi)\|_{\varepsilon}+\|R_{\varepsilon,\xi}\|_{\varepsilon}\right)\\
\|T_{\varepsilon,\xi}(\phi_{1})-T_{\varepsilon,\xi}(\phi_{2})\|_{\varepsilon} & \le & c\left(\|N_{\varepsilon,\xi}(\phi_{1})-N_{\varepsilon,\xi}(\phi_{2})\|_{\varepsilon}\right)
\end{eqnarray*}
for some suitable constant $c>0$. By the mean value theorem (and
by the properties of $i^{*}$) we get
\[
\|N_{\varepsilon,\xi}(\phi_{1})-N_{\varepsilon,\xi}(\phi_{2})\|_{\varepsilon}\le c\left|f'(W_{\varepsilon,\xi}+\phi_{2}+t(\phi_{1}-\phi_{2}))-f'(W_{\varepsilon,\xi})\right|_{\frac{p}{p-2},\varepsilon}\|\phi_{1}-\phi_{2}\|_{\varepsilon}.
\]
By \cite[Remark 3.4]{MP09}, we have that $\left|f'(W_{\varepsilon,\xi}+\phi_{2}+t(\phi_{1}-\phi_{2}))-f'(W_{\varepsilon,\xi})\right|_{\frac{p}{p-2},\varepsilon}<<1$
provided $\|\phi_{1}\|_{\varepsilon}$ and $\|\phi_{2}\|_{\varepsilon}$
small enough. Thus there exists $0<C<1$ such that $\|T_{\varepsilon,\xi}(\phi_{1})-T_{\varepsilon,\xi}(\phi_{2})\|_{\varepsilon}\le C\|\phi_{1}-\phi_{2}\|_{\varepsilon}$.
Also, with the same estimates we get 
\[
\|N_{\varepsilon,\xi}(\phi)\|_{\varepsilon}\le c\left(\|\phi\|_{\varepsilon}^{2}+\|\phi\|_{\varepsilon}^{p-1}\right).
\]
This, combined with Lemma \ref{lem:Reps} gives us 
\[
\|T_{\varepsilon,\xi}(\phi)\|_{\varepsilon}\le c\left(\|N_{\varepsilon,\xi}(\phi)\|_{\varepsilon}+\|R_{\varepsilon,\xi}\|_{\varepsilon}\right)\le c\left(\|\phi\|_{\varepsilon}^{2}+\|\phi\|_{\varepsilon}^{p-1}+\varepsilon^{1+\frac{n}{p'}}\right).
\]
 So, there exists $c>0$ such that $T_{\varepsilon,\xi}$ maps a ball
of center $0$ and radius $c\varepsilon^{1+\frac{n}{p'}}$ in $K_{\varepsilon,\xi}^{\bot}$
into itself and it is a contraction. So there exists a fixed point
$\phi_{\varepsilon,\xi}$ with norm $\|\phi_{\varepsilon,\xi}\|_{\varepsilon}\le\varepsilon^{1+\frac{n}{p'}}$. 

The regularity of $ $$\phi_{\varepsilon,\xi}$ with respect to $\xi$
is proved via implicit function theorem. Let us define the functional
\begin{eqnarray*}
G & : & \partial M\times H_{\varepsilon}\rightarrow\mathbb{R}\\
G(\xi,u) & := & \Pi_{\varepsilon,\xi}^{\bot}\left\{ W_{\varepsilon,\xi}+\Pi_{\varepsilon,\xi}^{\bot}u+i_{\varepsilon}^{*}\left[f\left(W_{\varepsilon,\xi}+\Pi_{\varepsilon,\xi}^{\bot}u\right)\right]\right\} +\Pi_{\varepsilon,\xi}u.
\end{eqnarray*}
We have that $G(\xi,\phi_{\varepsilon,\xi})=0$ and that the operator
${\displaystyle \frac{\partial}{\partial u}G(\xi,\phi_{\varepsilon,\xi}):H_{\varepsilon}\rightarrow H_{\varepsilon}}$
is invertible. This concludes the proof.
\end{proof}

\section{Sketch of the proof of Theorem \ref{thm:main}}\label{proof}

In section \ref{sec:Reduction}, Proposition \ref{prop:phieps} we
found a function $\phi_{\varepsilon,\xi}$ solving (\ref{eq:red1}).
In order to solve (\ref{eq:red2}) we define the functional $J_{\varepsilon}:H^{1}(M)\rightarrow\mathbb{R}$
\[
J_{\varepsilon}(u)=\frac{1}{\varepsilon^{n}}\int_{M}\frac{1}{2}\varepsilon^{2}|\nabla u|_{g}^{2}+\frac{1}{2}u^{2}-\frac{1}{p}(u^{+})^{p}d\mu_{g}.
\]
In which follows we will often use the notation $F(u)=\frac{1}{p}(u^{+})^{p}$. 

By $J_{\varepsilon}$ we define the reduced functional $\tilde{J}_{\varepsilon}$
on $\partial M$ as 
\[
\tilde{J}_{\varepsilon}(\xi)=J_{\varepsilon}(W_{\varepsilon,\xi}+\phi_{\varepsilon,\xi})
\]
where $\phi_{\varepsilon,\xi}$ is uniquely determined by Proposition
\ref{prop:phieps}.
\begin{rem}
Our goal is to find critical points for $\tilde{J}_{\varepsilon}$,
since any critical point $\xi$ for $\tilde{J}_{\varepsilon}$ corresponds
to a function $\phi_{\varepsilon,\xi}+W_{\varepsilon,\xi}$ which
solves equation (\ref{eq:red2}).
\end{rem}
At this point we give the expansion for the functional $\tilde{J}_{\varepsilon}$
with respect to $\varepsilon$. By Lemma \ref{lem:espJ1} and Lemma
\ref{lem:espJ2} it holds 
\begin{equation}
\tilde{J}_{\varepsilon}(\xi)=C-\varepsilon H(\xi)+o(\varepsilon)\label{eq:espansioneJ}
\end{equation}
$C^{1}$ uniformly with respect to $\xi\in\partial M$ as $\varepsilon$
goes to zero. Here $H(\xi)$ is the mean curvature of the boundary
$\partial M$ at $\xi$. If $\xi_{0}$ is a $C^{1}$-stable critical
point for $H$, in light of (\ref{eq:espansioneJ}) and by definition
of $C^{1}$-stability, we have that, for $\varepsilon$ small enough
there exists $\xi_{\varepsilon}$ close to $\xi_{0}$ critical point
for $\tilde{J}_{\varepsilon}$, and we can prove Theorem \ref{thm:main}.

\section{Asymptotic expansion of the reduced functional}\label{expansion}

In this  we study the asymptotic expansion of $\tilde{J}_{\varepsilon}(\xi)$
with respect to $\varepsilon$. 
\begin{lem}
\label{lem:espJ1}It holds 
\begin{equation}
\tilde{J}_{\varepsilon}(\xi)=J_{\varepsilon}(W_{\varepsilon,\xi}+\phi_{\varepsilon,\xi})=J_{\varepsilon}(W_{\varepsilon,\xi})+o(\varepsilon)\label{eq:asexp1}
\end{equation}
uniformly with respect to $\xi\in\partial M$ as $\varepsilon$ goes
to zero.

Moreover, setting $\xi(y)=\exp_{\xi}^{\partial}(y)$, $y\in B^{n-1}(0,r)$
it holds
\begin{eqnarray}
\left(\frac{\partial}{\partial y_{h}}\tilde{J}_{\varepsilon}(\xi(y))\right)_{|_{y=0}} & = & \left(\frac{\partial}{\partial y_{h}}J_{\varepsilon}(W_{\varepsilon,\xi(y)}+\phi_{\varepsilon,\xi(y)})\right)_{|_{y=0}}=\nonumber \\
 & = & \left(\frac{\partial}{\partial y_{h}}J_{\varepsilon}(W_{\varepsilon,\xi(y)})\right)_{|_{y=0}}+o(\varepsilon)\label{eq:asexp2}
\end{eqnarray}
\end{lem}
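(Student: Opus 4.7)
The assertion has two parts, the $C^0$ expansion (\ref{eq:asexp1}) and the derivative expansion (\ref{eq:asexp2}). My overall strategy is a Taylor expansion of $J_\varepsilon$ around $W_{\varepsilon,\xi}$, combined with the quantitative bounds $\|\phi_{\varepsilon,\xi}\|_\varepsilon \le c\varepsilon^{1+n/p'}$ from Proposition \ref{prop:phieps} and $\|R_{\varepsilon,\xi}\|_\varepsilon \le c\varepsilon^{1+n/p'}$ from Lemma \ref{lem:Reps}, together with the orthogonality $\phi_{\varepsilon,\xi}\in K_{\varepsilon,\xi}^\perp$ and the Lagrange--multiplier form of the reduced equation. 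The subcriticality of $p$ enters only to control nonlinear factors of $J_\varepsilon''$ by H\"older.

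For (\ref{eq:asexp1}) I would write
\[
J_\varepsilon(W+\phi) - J_\varepsilon(W) = J_\varepsilon'(W)[\phi] + \int_0^1(1-t)\,J_\varepsilon''(W+t\phi)[\phi,\phi]\,dt.
\]
The variational identity $J_\varepsilon'(W)[v] = \langle W - i_\varepsilon^*[f(W)], v\rangle_\varepsilon$, the fact that $R_{\varepsilon,\xi} = \Pi_{\varepsilon,\xi}^\perp\{i_\varepsilon^*[f(W)]-W\}$ lies in $K_{\varepsilon,\xi}^\perp$, and the orthogonality $\phi\in K_{\varepsilon,\xi}^\perp$, together give $J_\varepsilon'(W)[\phi] = -\langle R_{\varepsilon,\xi}, \phi\rangle_\varepsilon$, which is $O(\varepsilon^{2+2n/p'})$ by Cauchy--Schwarz. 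The quadratic term is $O(\|\phi\|_\varepsilon^2) = O(\varepsilon^{2+2n/p'})$. Since $2+2n/p' > 1$, the total is $o(\varepsilon)$.

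For (\ref{eq:asexp2}) the chain rule yields
\[
\partial_{y_h}\tilde J_\varepsilon(\xi(y)) = J_\varepsilon'(W+\phi)[\partial_{y_h}W] + J_\varepsilon'(W+\phi)[\partial_{y_h}\phi],
\]
with $W = W_{\varepsilon,\xi(y)}$ and $\phi = \phi_{\varepsilon,\xi(y)}$. For the first summand, a one-step Taylor expansion compares $J_\varepsilon'(W+\phi)[\partial_{y_h}W]$ with $J_\varepsilon'(W)[\partial_{y_h}W]$ up to an error $\int_0^1 J_\varepsilon''(W+t\phi)[\phi,\partial_{y_h}W]\,dt$. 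By Lemma \ref{lem:derWeps} and the rescaling $y=\varepsilon z$, $\|\partial_{y_h}W\|_\varepsilon = O(\varepsilon^{-1})$, so this error is $O(\|\phi\|_\varepsilon\cdot\varepsilon^{-1}) = O(\varepsilon^{n/p'})$, which is $o(\varepsilon)$ since $n/p' > 1$ whenever $p > n/(n-1)$; this inequality is always satisfied in the range $p>2$, $n\ge 2$. For the second summand I exploit the Lagrange--multiplier representation: since $\phi$ solves (\ref{eq:red1}), $W+\phi - i_\varepsilon^*[f(W+\phi)] = \sum_i c_i(\varepsilon,y)\,Z^i_{\varepsilon,\xi(y)}$ for certain multipliers $c_i$, so $J_\varepsilon'(W+\phi)[\partial_{y_h}\phi] = \sum_i c_i\,\langle Z^i,\partial_{y_h}\phi\rangle_\varepsilon$. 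Differentiating the defining orthogonality $\langle Z^i_{\varepsilon,\xi(y)}, \phi_{\varepsilon,\xi(y)}\rangle_\varepsilon = 0$ in $y_h$ turns this into $-\sum_i c_i\,\langle\partial_{y_h}Z^i,\phi\rangle_\varepsilon$, bounded by $|c_i|\,\|\partial_{y_h}Z^i\|_\varepsilon\,\|\phi\|_\varepsilon = |c_i|\,O(\varepsilon^{n/p'})$; the multipliers $c_i$ are uniformly bounded (and in fact small) upon testing the Lagrange identity against $Z^j$ and inverting the nearly diagonal Gram matrix $(\langle Z^i,Z^j\rangle_\varepsilon)$, using expansions in the spirit of part (i).

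The main obstacle is this last $\partial_{y_h}\phi$ contribution: because the finite-dimensional subspace $K_{\varepsilon,\xi(y)}$ (and hence its orthogonal complement) rotates with $y$, the derivative $\partial_{y_h}\phi_{\varepsilon,\xi(y)}$ is not itself small in $\|\cdot\|_\varepsilon$. The key observation that rescues the argument is that one never needs $\partial_{y_h}\phi$ directly, only its weak pairings against the generators $Z^i$, and those are extracted cheaply from differentiating the orthogonality condition, reducing everything back to quantities already controlled by Proposition \ref{prop:phieps} and Lemma \ref{lem:Reps}.
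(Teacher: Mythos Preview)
Your proposal is correct and follows the paper's overall architecture: a second-order Taylor expansion for (\ref{eq:asexp1}), and for (\ref{eq:asexp2}) the same splitting into $L_1=[J'_\varepsilon(W+\phi)-J'_\varepsilon(W)][\partial_{y_h}W]$ and $L_2=J'_\varepsilon(W+\phi)[\partial_{y_h}\phi]$, with $L_2$ handled identically through the Lagrange multipliers $c_i$ and differentiation of the orthogonality $\langle Z^i,\phi\rangle_\varepsilon=0$. The paper pins down $|c_i|=O(\varepsilon)$ via an explicit computation, which you only sketch as ``bounded (in fact small)''; however, under the bound $\|\phi\|_\varepsilon=O(\varepsilon^{1+n/p'})$ from Proposition~\ref{prop:phieps} your argument needs only uniform boundedness of the $c_i$, so this is harmless.

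The one genuine difference is in $L_1$. You estimate it in one line by $\int_0^1|J''_\varepsilon(W+t\phi)[\phi,\partial_{y_h}W]|\,dt=O(\|\phi\|_\varepsilon\|\partial_{y_h}W\|_\varepsilon)=O(\varepsilon^{n/p'})=o(\varepsilon)$, relying entirely on the size of $\|\phi\|_\varepsilon$. The paper instead decomposes $L_1=A_1+A_2+A_3$: it inserts $\pm\frac{1}{\varepsilon}Z^h_{\varepsilon,\xi}$ using (\ref{eq:lemma63}), so that the dominant $O(\varepsilon^{-1})$ part of $\partial_{y_h}W$ is neutralised by the orthogonality $\phi\perp Z^h_{\varepsilon,\xi}$, and then isolates the superquadratic remainder $f(W+\phi)-f(W)-f'(W)\phi$. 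Your shortcut buys brevity and avoids the auxiliary estimates (\ref{eq:lemma63}) and (\ref{eq:Ziistella}); the paper's route is the classical Lyapunov--Schmidt cancellation device, which makes the geometric reason for the smallness---orthogonality of $\phi$ to the approximate kernel---explicit, and would be the safer choice if the bound on $\|\phi\|_\varepsilon$ were only $O(\varepsilon)$.
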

\begin{proof}
We split the proof in several steps.

\emph{Step 1}: we prove (\ref{eq:asexp1}). Using (\ref{eq:red1})
we get 
\begin{align*}
\tilde{J}_{\varepsilon}(\xi)-J_{\varepsilon}(W_{\varepsilon,\xi})= & \frac{1}{2}\|\phi_{\varepsilon,\xi}\|_{\varepsilon}^{2}+\frac{1}{\varepsilon^{n}}\int_{M}\varepsilon^{2}g(\nabla W_{\varepsilon,\xi},\nabla \phi_{\varepsilon,\xi})+W_{\varepsilon,\xi}\phi_{\varepsilon,\xi}-f\left(W_{\varepsilon,\xi}\right)\phi_{\varepsilon,\xi}d\mu_{g}\\
 & -\frac{1}{\varepsilon^{n}}\int_{M}F\left(W_{\varepsilon,\xi}+\phi_{\varepsilon,\xi}\right)-F\left(W_{\varepsilon,\xi}\right)-f\left(W_{\varepsilon,\xi}\right)\phi_{\varepsilon,\xi}\\
=- & \frac{1}{2}\|\phi_{\varepsilon,\xi}\|_{\varepsilon}^{2}+\frac{1}{\varepsilon^{n}}\int_{M}\left[f\left(W_{\varepsilon,\xi}+\phi_{\varepsilon,\xi}\right)-f\left(W_{\varepsilon,\xi}\right)\right]\phi_{\varepsilon,\xi}d\mu_{g}\\
 & -\frac{1}{\varepsilon^{n}}\int_{M}F\left(W_{\varepsilon,\xi}+\phi_{\varepsilon,\xi}\right)-F\left(W_{\varepsilon,\xi}\right)-f\left(W_{\varepsilon,\xi}\right)\phi_{\varepsilon,\xi}
\end{align*}
By the mean value theorem we obtain that 
\[
\left|\tilde{J}_{\varepsilon}(\xi)-J_{\varepsilon}(W_{\varepsilon,\xi})\right|\le\frac{1}{2}\|\phi_{\varepsilon,\xi}\|_{\varepsilon}^{2}+\left|\frac{1}{\varepsilon^{n}}\int_{M}f'\left(W_{\varepsilon,\xi}+t_{1}\phi_{\varepsilon,\xi}\right)\phi_{\varepsilon,\xi}^{2}\right|+\left|\frac{1}{\varepsilon^{n}}\int_{M}f'\left(W_{\varepsilon,\xi}+t_{2}\phi_{\varepsilon,\xi}\right)\phi_{\varepsilon,\xi}^{2}\right|
\]
for some $t_{1},t_{2}\in(0,1)$. Now, by the properties of $f'$ we
can conclude that 
\[
\left|\tilde{J}_{\varepsilon}(\xi)-J_{\varepsilon}(W_{\varepsilon,\xi})\right|\le c\left(\|\phi_{\varepsilon,\xi}\|_{\varepsilon}^{2}+\|\phi_{\varepsilon,\xi}\|_{\varepsilon}^{p}\right)
\]
 and in light of Proposition \ref{prop:phieps} we obtain (\ref{eq:asexp1}).

\emph{Step 2: }in order to prove (\ref{eq:asexp2}), consider that
\begin{multline*}
\frac{\partial}{\partial y_{h}}J_{\varepsilon}(W_{\varepsilon,\xi(y)}+\phi_{\varepsilon,\xi(y)})-\frac{\partial}{\partial y_{h}}J_{\varepsilon}(W_{\varepsilon,\xi(y)})\\
=J_{\varepsilon}'(W_{\varepsilon,\xi(y)}+\phi_{\varepsilon,\xi(y)})\left[\frac{\partial}{\partial y_{h}}W_{\varepsilon,\xi(y)}+\frac{\partial}{\partial y_{h}}\phi_{\varepsilon,\xi(y)}\right]-J_{\varepsilon}'(W_{\varepsilon,\xi(y)})\left[\frac{\partial}{\partial y_{h}}W_{\varepsilon,\xi(y)}\right]\\
=\left[J_{\varepsilon}'(W_{\varepsilon,\xi(y)}+\phi_{\varepsilon,\xi(y)})-J_{\varepsilon}'(W_{\varepsilon,\xi(y)})\right]\left[\frac{\partial}{\partial y_{h}}W_{\varepsilon,\xi(y)}\right]\\
+J_{\varepsilon}'(W_{\varepsilon,\xi(y)}+\phi_{\varepsilon,\xi(y)})\left[\frac{\partial}{\partial y_{h}}\phi_{\varepsilon,\xi(y)}\right]=L_{1}+L_{2}.
\end{multline*}

\emph{Step 3:} we estimate $L_{2}$. We have, by (\ref{eq:red1}),
that 
\[
J_{\varepsilon}'(W_{\varepsilon,\xi(y)}+\phi_{\varepsilon,\xi(y)})\left[\frac{\partial}{\partial y_{h}}\phi_{\varepsilon,\xi(y)}\right]=\sum_{l=1}^{n-1}c_{\varepsilon}^{l}\left\langle Z_{\varepsilon,\xi(y)}^{l},\frac{\partial}{\partial y_{h}}\phi_{\varepsilon,\xi(y)}\right\rangle _{\varepsilon}.
\]
We prove that 
\begin{equation}
\sum_{l=1}^{n-1}|c_{\varepsilon}^{l}|=O(\varepsilon).\label{eq:c1}
\end{equation}
Indeed we have, by (\ref{eq:red1}) and (\ref{eq:ZiZj}), for some
positive constant $C$, 
\begin{equation}
J_{\varepsilon}'(W_{\varepsilon,\xi(y)}+\phi_{\varepsilon,\xi(y)})\left[Z_{\varepsilon,\xi(y)}^{s}\right]=\sum_{l=1}^{n-1}c_{\varepsilon}^{l}\left\langle Z_{\varepsilon,\xi(y)}^{l},Z_{\varepsilon,\xi(y)}^{s}\right\rangle _{\varepsilon}=C\sum_{l=1}^{n-1}c_{\varepsilon}^{l}(\delta_{ls}+o(1)).\label{eq:c2}
\end{equation}
Also, since $\phi_{\varepsilon,\xi(y)}\in K_{\varepsilon,\xi(y)}^{\bot}$,
we have
\begin{multline*}
J_{\varepsilon}'(W_{\varepsilon,\xi(y)}+\phi_{\varepsilon,\xi(y)})\left[Z_{\varepsilon,\xi(y)}^{s}\right]\\
=\frac{1}{\varepsilon^{n}}\int_{M}\varepsilon^{2}g(\nabla W_{\varepsilon,\xi(y)},\nabla Z_{\varepsilon,\xi(y)}^{s})+W_{\varepsilon,\xi(y)}Z_{\varepsilon,\xi(y)}^{s}-f(W_{\varepsilon,\xi(y)})Z_{\varepsilon,\xi(y)}^{s}d\mu_{g}\\
-\frac{1}{\varepsilon^{n}}\int_{M}\left[f(W_{\varepsilon,\xi(y)}+\phi_{\varepsilon,\xi(y)})-f(W_{\varepsilon,\xi(y)})\right]Z_{\varepsilon,\xi(y)}^{s}d\mu_{g}.
\end{multline*}
By (\ref{eq:g1}), (\ref{eq:g2}) and (\ref{eq:g3}), after a change
of variables we have 
\begin{multline*}
\frac{1}{\varepsilon^{n}}\int_{M}\varepsilon^{2}g(\nabla W_{\varepsilon,\xi(y)},\nabla Z_{\varepsilon,\xi(y)}^{s})+W_{\varepsilon,\xi(y)}Z_{\varepsilon,\xi(y)}^{s}-f(W_{\varepsilon,\xi(y)})Z_{\varepsilon,\xi(y)}^{s}d\mu_{g}\\
=\int_{\mathbb{R}_{+}^{n}}\nabla U\nabla\varphi^{l}+U\varphi^{l}-f(U)\varphi^{l}dz+O(\varepsilon)=O(\varepsilon).
\end{multline*}
Besides, by the mean value theorem, for some $t\in(0,1)$, 
\begin{multline*}
\left|\frac{1}{\varepsilon^{n}}\int_{M}\left[f(W_{\varepsilon,\xi(y)}+\phi_{\varepsilon,\xi(y)})-f(W_{\varepsilon,\xi(y)})\right]Z_{\varepsilon,\xi(y)}^{s}d\mu_{g}\right|\\
=\left|\frac{1}{\varepsilon^{n}}\int_{M}\left[f'(W_{\varepsilon,\xi(y)}+t\phi_{\varepsilon,\xi(y)})\right]Z_{\varepsilon,\xi(y)}^{s}\phi_{\varepsilon,\xi(y)}d\mu_{g}\right|\\
\le c\frac{1}{\varepsilon^{n}}\int_{M}\left(\left|W_{\varepsilon,\xi(y)}\right|^{p-2}+\left|\phi_{\varepsilon,\xi(y)}\right|^{p-2}\right)\left|Z_{\varepsilon,\xi(y)}^{s}\right|\left|\phi_{\varepsilon,\xi(y)}\right|d\mu_{g}\\
\le c\left(\|W_{\varepsilon,\xi(y)}\|_{\varepsilon}^{p-2}+\|\phi_{\varepsilon,\xi(y)}\|_{\varepsilon}^{p-2}\right)\|Z_{\varepsilon,\xi(y)}^{s}\|_{\varepsilon}\|\phi_{\varepsilon,\xi(y)}\|_{\varepsilon}=O(\varepsilon^{1+\frac{n}{p'}})=o(\varepsilon).
\end{multline*}
Hence $J_{\varepsilon}'(W_{\varepsilon,\xi(y)}+\phi_{\varepsilon,\xi(y)})\left[Z_{\varepsilon,\xi(y)}^{s}\right]=O(\varepsilon)$
and, comparing with (\ref{eq:c2}), we get (\ref{eq:c1}). At this
point we have, by (\ref{eq:lemma61}), (\ref{eq:c1}) and by Proposition
(\ref{prop:phieps}), that 
\begin{align*}
|L_{2}| & \le\left|\sum_{l=1}^{n-1}c_{\varepsilon}^{l}\left\langle Z_{\varepsilon,\xi(y)}^{l},\frac{\partial}{\partial y_{h}}\phi_{\varepsilon,\xi(y)}\right\rangle _{\varepsilon}\right|=\left|\sum_{l=1}^{n-1}c_{\varepsilon}^{l}\left\langle \frac{\partial}{\partial y_{h}}Z_{\varepsilon,\xi(y)}^{l},\phi_{\varepsilon,\xi(y)}\right\rangle _{\varepsilon}\right|\\
 & \le\left(\sum_{l=1}^{n-1}|c_{\varepsilon}^{l}|\right)\left\Vert \frac{\partial}{\partial y_{h}}Z_{\varepsilon,\xi(y)}^{l}\right\Vert _{\varepsilon}\left\Vert \phi_{\varepsilon,\xi(y)}\right\Vert _{\varepsilon}\le O(\varepsilon^{1+\frac{n}{p'}})=o(\varepsilon).
\end{align*}

\emph{Step 4: }we estimate $L_{1}$. We have
\begin{align*}
L_{1}= & \left\langle \phi_{\varepsilon,\xi(y)},\frac{\partial}{\partial y_{h}}W_{\varepsilon,\xi(y)}\right\rangle _{\varepsilon}-\frac{1}{\varepsilon^{n}}\int_{M}\left[f(W_{\varepsilon,\xi(y)}+\phi_{\varepsilon,\xi(y)})-f(W_{\varepsilon,\xi(y)})\right]\frac{\partial}{\partial y_{h}}W_{\varepsilon,\xi(y)}d\mu_{g}\\
= & \left\langle \phi_{\varepsilon,\xi(y)}-i_{\varepsilon}^{*}\left[f'(W_{\varepsilon,\xi(y)})\phi_{\varepsilon,\xi(y)}\right],\frac{\partial}{\partial y_{h}}W_{\varepsilon,\xi(y)}\right\rangle _{\varepsilon}\\
 & -\frac{1}{\varepsilon^{n}}\int_{M}\left[f(W_{\varepsilon,\xi(y)}+\phi_{\varepsilon,\xi(y)})-f(W_{\varepsilon,\xi(y)})-f'(W_{\varepsilon,\xi(y)})\phi_{\varepsilon,\xi(y)}\right]\frac{\partial}{\partial y_{h}}W_{\varepsilon,\xi(y)}d\mu_{g}\\
= & \left\langle \phi_{\varepsilon,\xi(y)}-i_{\varepsilon}^{*}\left[f'(W_{\varepsilon,\xi(y)})\phi_{\varepsilon,\xi(y)}\right],\frac{\partial}{\partial y_{h}}W_{\varepsilon,\xi(y)}+\frac{1}{\varepsilon}Z_{\varepsilon,\xi(y)}^{h}\right\rangle _{\varepsilon}\\
 & -\frac{1}{\varepsilon}\left\langle \phi_{\varepsilon,\xi(y)},Z_{\varepsilon,\xi(y)}^{h}-i_{\varepsilon}^{*}\left[f'(W_{\varepsilon,\xi(y)})\phi_{\varepsilon,\xi(y)}\right]\right\rangle _{\varepsilon}\\
 & -\frac{1}{\varepsilon^{n}}\int_{M}\left[f(W_{\varepsilon,\xi(y)}+\phi_{\varepsilon,\xi(y)})-f(W_{\varepsilon,\xi(y)})-f'(W_{\varepsilon,\xi(y)})\phi_{\varepsilon,\xi(y)}\right]\frac{\partial}{\partial y_{h}}W_{\varepsilon,\xi(y)}d\mu_{g}\\
= & A_{1}+A_{2}+A_{3}
\end{align*}
For the first term we have, by (\ref{eq:lemma63}) 
\begin{align*}
A_{1} & \le\|\phi_{\varepsilon,\xi(y)}-i_{\varepsilon}^{*}\left[f'(W_{\varepsilon,\xi(y)})\phi_{\varepsilon,\xi(y)}\right]\|_{\varepsilon}\left\Vert \frac{\partial}{\partial y_{h}}W_{\varepsilon,\xi(y)}+\frac{1}{\varepsilon}Z_{\varepsilon,\xi(y)}^{h}\right\Vert _{\varepsilon}\\
 & \le c\varepsilon\|\phi_{\varepsilon,\xi(y)}-i_{\varepsilon}^{*}\left[f'(W_{\varepsilon,\xi(y)})\phi_{\varepsilon,\xi(y)}\right]\|_{\varepsilon}\\
 & \le c\varepsilon\left(\|\phi_{\varepsilon,\xi(y)}\|_{\varepsilon}+-|f'(W_{\varepsilon,\xi(y)})\phi_{\varepsilon,\xi(y)}|_{p',\varepsilon}\right)\le\varepsilon\|\phi_{\varepsilon,\xi(y)}\|_{\varepsilon}=o(\varepsilon).
\end{align*}
For the second term, in light of Proposition \ref{prop:phieps} and
Equation (\ref{eq:Ziistella}), we have
\[
A_{2}\le\frac{1}{\varepsilon}\|\phi_{\varepsilon,\xi(y)}\|_{\varepsilon}\|Z_{\varepsilon,\xi(y)}^{h}-i_{\varepsilon}^{*}\left[f'(W_{\varepsilon,\xi(y)})Z_{\varepsilon,\xi(y)}^{h}\right]\|_{\varepsilon}=O(\varepsilon^{1+2\frac{n}{p'}})=o(\varepsilon).
\]
In order to estimate the last term, we have to consider separately
case $2\le p<3$ and $p\ge3$.

We recall (\cite[Remark 3.4]{MP09}) that 
\[
\left|f'(W_{\varepsilon,\xi}+v)-f'(W_{\varepsilon,\xi})\right|\le\left\{ \begin{array}{ccc}
c(p)|v|^{p-2} &  & 2<p<3\\
\\
c(p)[W_{\varepsilon,\xi}^{p-3}|v|+|v|^{p-2}] &  & p\ge3
\end{array}\right.
\]
 For $p\ge3,$ we have, by the growth properties of $f$ , and using
(\ref{eq:lemma61}), we get 
\begin{align*}
A_{3} & \le\frac{c}{\varepsilon^{n}}\int_{M}\left[|W_{\varepsilon,\xi(y)}|^{p-3}\phi_{\varepsilon,\xi(y)}^{2}+|\phi_{\varepsilon,\xi(y)}|^{p-1}\right]\left|\frac{\partial}{\partial y_{h}}W_{\varepsilon,\xi(y)}\right|d\mu_{g}\\
 & \le\|\phi_{\varepsilon,\xi(y)}\|_{\varepsilon}^{2}\left\Vert \frac{\partial}{\partial y_{h}}W_{\varepsilon,\xi(y)}\right\Vert _{\varepsilon}+\|\phi_{\varepsilon,\xi(y)}\|_{\varepsilon}^{p-1}\left\Vert \frac{\partial}{\partial y_{h}}W_{\varepsilon,\xi(y)}\right\Vert _{\varepsilon}\\
 & \le O(\varepsilon^{1+2\frac{n}{p'}})+O(\varepsilon^{p-2+(p-1)\frac{n}{p'}})=o(\varepsilon)
\end{align*}
since $p\ge3.$ 

For $2<p<3$, in a similar way, we get
\[
A_{3}\le\frac{c}{\varepsilon^{n}}\int_{M}\phi_{\varepsilon,\xi(y)}^{p-1}\left|\frac{\partial}{\partial y_{h}}W_{\varepsilon,\xi(y)}\right|d\mu_{g}\le\|\phi_{\varepsilon,\xi(y)}\|_{\varepsilon}^{p-1}\left\Vert \frac{\partial}{\partial y_{h}}W_{\varepsilon,\xi(y)}\right\Vert _{\varepsilon}=o(\varepsilon)
\]
which concludes the proof.\end{proof}
\begin{lem}
\label{lem:espJ2}It holds 
\[
J_{\varepsilon}(W_{\varepsilon,\xi})=C-\varepsilon\alpha H(\xi)+o(\varepsilon)
\]
$C^0$-uniformly with respect to $\xi$ as $\varepsilon$ goes to zero, where
\begin{eqnarray*}
C & := & \int_{\mathbb{R}_{+}^{n}}\frac{1}{2}|\nabla U(z)|^{2}+\frac{1}{2}U^{2}(z)-\frac{1}{p}U^{p}(z)dz\\
\alpha & := & \frac{\left(n-1\right)}{2}\int_{\mathbb{R}_{+}^{n}}\left(\frac{U'(|z|)}{|z|}\right)^{2}z_{n}^{3}dz
\end{eqnarray*}
\end{lem}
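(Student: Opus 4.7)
Since $W_{\varepsilon,\xi}$ is supported in the chart $I_\xi(R)$, I would express $J_\varepsilon(W_{\varepsilon,\xi})$ in Fermi coordinates at $\xi$ as
\[
\frac{1}{\varepsilon^n}\int_{D^+(R)}\Bigl[\tfrac{1}{2}\varepsilon^2 g_\xi^{ij}(y)\partial_i(U_\varepsilon\chi_R)\partial_j(U_\varepsilon\chi_R) + \tfrac{1}{2}(U_\varepsilon\chi_R)^2 - \tfrac{1}{p}(U_\varepsilon\chi_R)^p\Bigr]\sqrt{|g_\xi(y)|}\,dy,
\]
and then rescale $y=\varepsilon z$, obtaining an integral over $D^+(R/\varepsilon)$ with integrand built from $U(z)$, $\nabla U(z)$, $g_\xi^{ij}(\varepsilon z)$, and $\sqrt{|g_\xi(\varepsilon z)|}$. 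By the exponential decay of $U$ and its derivatives, and since $\chi_R\equiv 1$ on $D^+(R/2)$, both the contribution of $\nabla\chi_R$ and the difference between $D^+(R/\varepsilon)$ and $\mathbb{R}_+^n$ are $O(e^{-c/\varepsilon})$, hence negligible for the required order.

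\textbf{Leading term and first-order correction.} I would then substitute the expansions (\ref{eq:g1})--(\ref{eq:g3}), namely $g_\xi^{ij}(\varepsilon z)=\delta_{ij}+2\varepsilon h_{ij}(0)z_n+O(\varepsilon^2|z|^2)$ for $i,j\le n-1$, $g_\xi^{in}=\delta_{in}$, and $\sqrt{|g_\xi(\varepsilon z)|}=1-(n-1)\varepsilon H(\xi)z_n+O(\varepsilon^2|z|^2)$. The $\varepsilon^0$ term reproduces $C$. The $O(\varepsilon)$ correction is the sum of two pieces: first, from $\sqrt{|g_\xi|}$,
\[
-(n-1)\,\varepsilon\,H(\xi)\int_{\mathbb{R}_+^n} z_n\Bigl[\tfrac{1}{2}|\nabla U|^2+\tfrac{1}{2}U^2-\tfrac{1}{p}U^p\Bigr]dz;
\]
second, from the gradient term,
\[
\varepsilon\sum_{i,j=1}^{n-1}h_{ij}(0)\int_{\mathbb{R}_+^n}z_n\,\partial_iU\,\partial_jU\,dz.
\]
The remainder, controlled by $\varepsilon^2\int|z|^2(U^2+|\nabla U|^2+U^p)\,dz$ through the uniform estimates on $g_\xi$ and its derivatives over the compact $\partial M$, is $o(\varepsilon)$ uniformly in $\xi$.

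\textbf{Reduction via radial symmetry and a Pohozaev identity.} Since $U$ is radial, $\partial_iU\,\partial_jU=(U'(|z|)/|z|)^2 z_iz_j$, so off-diagonal terms $i\neq j$ vanish by parity, the diagonal terms with $i\le n-1$ are mutually equal, and using $\sum_{i=1}^{n-1}h_{ii}(0)=(n-1)H(\xi)$ together with $\sum_{i=1}^{n-1}z_i^2=|z|^2-z_n^2$ one rewrites the gradient piece as
\[
\varepsilon\,H(\xi)\!\int_{\mathbb{R}_+^n}\!\! z_n\,U'(|z|)^2\,dz \;-\;\varepsilon\,H(\xi)\!\int_{\mathbb{R}_+^n}\!\!\tfrac{U'(|z|)^2}{|z|^2}z_n^3\,dz.
\]
To combine this with the $\sqrt{|g_\xi|}$ contribution and collapse the result to $-\varepsilon\alpha H(\xi)$, I would use Pohozaev-type identities on $\mathbb{R}_+^n$: multiply $-\Delta U+U=U^{p-1}$ by $z_nU$ and by $z_n\partial_nU$, integrate over $\mathbb{R}_+^n$, and exploit the Neumann condition $\partial_nU|_{z_n=0}=0$ along with the exponential decay to kill all boundary terms at infinity. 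These two identities provide the relations among $\int z_nU^2$, $\int z_nU^p$, $\int z_nU'^2$, and $\int z_n^3(U'/|z|)^2$ required to reduce the coefficient of $\varepsilon H(\xi)$ to precisely $-\alpha$.

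\textbf{Main obstacle and uniformity.} The main technical step is the Pohozaev computation in the third paragraph: it must be arranged so that, after integration by parts, all integrals other than $\int z_n^3(U'/|z|)^2\,dz$ cancel in the linear combination produced by steps (i) and (ii). Once this algebraic identity is established, $C^0$-uniformity in $\xi\in\partial M$ is automatic, since the finite Fermi atlas provides uniform bounds on the coefficients $h_{ij}$, $\Gamma^k_{ij}$ and on the $O(|y|^2)$ remainders in (\ref{eq:g1})--(\ref{eq:g3}), and the exponential decay of $U$ furnishes $\varepsilon$-independent integrable majorants for all error terms.
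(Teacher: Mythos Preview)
Your approach is essentially the same as the paper's: rescale in Fermi coordinates, expand the metric via (\ref{eq:g1})--(\ref{eq:g3}), and reduce the first-order coefficient using radial symmetry plus a Pohozaev-type identity. The paper packages the gradient piece slightly differently, writing it as $(n-1)H(\xi)\int (U'/|z|)^2 z_1^2 z_n\,dz$ and then invoking the polar-coordinate relation $\int (U'/|z|)^2 z_1^2 z_n\,dz=\tfrac12\int (U'/|z|)^2 z_n^3\,dz$ (equation (\ref{eq:z1-zn})); your rewriting via $\sum_{i<n}z_i^2=|z|^2-z_n^2$ is equivalent.

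The one concrete slip is your choice of Pohozaev multipliers. Testing the equation against $z_n\partial_nU$ produces only unweighted half-space integrals (no factor $z_n$ survives after integration by parts), and testing against $z_nU$ leaves the boundary term $\tfrac12\int_{\{z_n=0\}}U^2$; neither combination yields the relation among $\int z_nU^2$, $\int z_nU^p$, $\int z_n|\nabla U|^2$, $\int z_n^3(U'/|z|)^2$ that you need. The correct multiplier is $z_n^2\,\partial_nU$: this is exactly Lemma~\ref{lem:dUdzn} in the paper, which gives
\[
\int_{\mathbb{R}_+^n}\Bigl(\frac{U'(|z|)}{|z|}\Bigr)^2 z_n^3\,dz
=\int_{\mathbb{R}_+^n} z_n\Bigl[\tfrac12|\nabla U|^2+\tfrac12 U^2-\tfrac{1}{p}U^p\Bigr]dz,
\]
and with this single identity (together with (\ref{eq:z1-zn})) the coefficient collapses to $-\alpha$ immediately. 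Once you replace your two multipliers by this one, your outline goes through verbatim.
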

\begin{proof}
By definition of $J_{\varepsilon}$ we have
\begin{align*}
J_{\varepsilon}(W_{\varepsilon,\xi})= & \frac{1}{2}\int_{D^{+}(R/\varepsilon)}\sum_{i,j=1}^{n}g^{ij}(\varepsilon z)\frac{\partial\left(U(z)\chi_{R/\varepsilon}(z)\right)}{\partial z_{i}}\frac{\partial\left(U(z)\chi_{R/\varepsilon}(z)\right)}{\partial z_{j}}|g(\varepsilon z)|^{\frac{1}{2}}dz\\
 & +\int_{D^{+}(R/\varepsilon)}\left[\frac{1}{2}\left(U(z)\chi_{R/\varepsilon}(z)\right)^{2}-\frac{1}{p}\left(U(z)\chi_{R/\varepsilon}(z)\right)^{p}\right]|g(\varepsilon z)|^{\frac{1}{2}}dz.
\end{align*}
We easily get, by (\ref{eq:g1}), (\ref{eq:g2}) and (\ref{eq:g3})
\begin{eqnarray*}
J_{\varepsilon}(W_{\varepsilon,\xi}) & = & \int_{D^{+}(R/\varepsilon)}\frac{1}{2}\sum_{i=1}^{n}\frac{\partial\left(U(z)\chi_{R/\varepsilon}(z)\right)}{\partial z_{i}}\frac{\partial\left(U(z)\chi_{R/\varepsilon}(z)\right)}{\partial z_{i}}dz\\
 &  & +\int_{D^{+}(R/\varepsilon)}\frac{1}{2}\left(U(z)\chi_{R/\varepsilon}(z)\right)^{2}-\frac{1}{p}\left(U(z)\chi_{R/\varepsilon}(z)\right)^{p}dz\\
 &  & +\varepsilon\int_{D^{+}(R/\varepsilon)}\sum_{i,j=1}^{n-1}h^{ij}(0)z_{n}\frac{\partial\left(U(z)\chi_{R/\varepsilon}(z)\right)}{\partial z_{i}}\frac{\partial\left(U(z)\chi_{R/\varepsilon}(z)\right)}{\partial z_{j}}dz\\
 &  & -\frac{n-1}{2}\varepsilon\int_{D^{+}(R/\varepsilon)}H(\xi)z_{n}\sum_{i=1}^{n}\frac{\partial\left(U(z)\chi_{R/\varepsilon}(z)\right)}{\partial z_{i}}\frac{\partial\left(U(z)\chi_{R/\varepsilon}(z)\right)}{\partial z_{i}}dz\\
 &  & -\frac{n-1}{2}\varepsilon\int_{D^{+}(R/\varepsilon)}H(\xi)z_{n}\left(U(z)\chi_{R/\varepsilon}(z)\right)^{2}dz\\
 &  & +\frac{n-1}{p}\varepsilon\int_{D^{+}(R/\varepsilon)}H(\xi)z_{n}\left(U(z)\chi_{R/\varepsilon}(z)\right)^{p}dz+o(\varepsilon)=\\
 & = & \int_{\mathbb{R}_{+}^{n}}\frac{1}{2}|\nabla U(z)|^{2}+\frac{1}{2}U^{2}(z)-\frac{1}{p}U^{p}(z)dz\\
 &  & -\varepsilon(n-1)H(\xi)\int_{\mathbb{R}_{+}^{n}}z_{n}\left(\frac{1}{2}|\nabla U(z)|^{2}+\frac{1}{2}U^{2}(z)-\frac{1}{p}U^{p}(z)\right)dz\\
 &  & +\varepsilon\sum_{i,j=1}^{n-1}h^{ij}(0)\int_{\mathbb{R}_{+}^{n}}\left(\frac{U'(|z|)}{|z|}\right)^{2}z_{i}z_{j}z_{n}dz+o(\varepsilon).
\end{eqnarray*}
Using Lemma \ref{lem:dUdzn} finally we have
\begin{eqnarray*}
J_{\varepsilon}(W_{\varepsilon,\xi}) & = & \int_{\mathbb{R}_{+}^{n}}\frac{1}{2}|\nabla U(z)|^{2}+\frac{1}{2}U^{2}(z)-\frac{1}{p}U^{p}(z)dz\\
 &  & -\varepsilon(n-1)H(\xi)\int_{\mathbb{R}_{+}^{n}}\left(\frac{U'(|z|)}{|z|}\right)^{2}z_{n}^{3}dz\\
 &  & +\varepsilon\sum_{i,j=1}^{n-1}h^{ij}(0)\int_{\mathbb{R}_{+}^{n}}\left(\frac{U'(|z|)}{|z|}\right)^{2}z_{i}z_{j}z_{n}dz+o(\varepsilon).
\end{eqnarray*}
Now, by symmetry arguments and by (\ref{eq:H}) we have that 
\begin{align*}
\sum_{i,j=1}^{n-1}h^{ij}(0)\int_{\mathbb{R}_{+}^{n}}\left(\frac{U'(|z|)}{|z|}\right)^{2}z_{i}z_{j}z_{n}dz & =\sum_{i,j=1}^{n-1}h^{ij}(0)\delta_{ij}\int_{\mathbb{R}_{+}^{n}}\left(\frac{U'(|z|)}{|z|}\right)^{2}z_{i}z_{j}z_{n}dz=\\
 & =\sum_{i=1}^{n-1}h^{ii}(0)\int_{\mathbb{R}_{+}^{n}}\left(\frac{U'(|z|)}{|z|}\right)^{2}z_{i}^{2}z_{n}dz=\\
 & =(n-1)H(\xi)\int_{\mathbb{R}_{+}^{n}}\left(\frac{U'(|z|)}{|z|}\right)^{2}z_{1}^{2}z_{n}dz,
\end{align*}
and, by simple computation in polar coordinates,
\begin{equation}
\int_{\mathbb{R}_{+}^{n}}\left(\frac{U'(|z|)}{|z|}\right)^{2}z_{1}^{2}z_{n}dz=\frac{1}{2}\int_{\mathbb{R}_{+}^{n}}\left(\frac{U'(|z|)}{|z|}\right)^{2}z_{n}^{3}dz.\label{eq:z1-zn}
\end{equation}
Concluding, we get
\begin{align*}
J_{\varepsilon}(W_{\varepsilon,\xi})= & \int_{\mathbb{R}_{+}^{n}}\frac{1}{2}|\nabla U(z)|^{2}+\frac{1}{2}U^{2}(z)-\frac{1}{p}U^{p}(z)dz\\
 & -\varepsilon H(\xi)\left[\frac{(n-1)}{2}\int_{\mathbb{R}_{+}^{n}}\left(\frac{U'(|z|)}{|z|}\right)^{2}z_{n}^{3}dz\right]+o(\varepsilon),
\end{align*}
and we have the proof.\end{proof}
\begin{lem}
Let $\xi(y)=\exp_{\xi}^{\partial}(y)$, $y\in B^{n-1}(0,r)$ it holds
\[
\left.\frac{\partial}{\partial y_{h}}J_{\varepsilon}(W_{\varepsilon,\xi(y)})\right|_{y=0}=-\varepsilon\alpha\left.\left(\frac{\partial}{\partial y_{h}}H(\xi(y))\right)\right|_{y=0}+o(\varepsilon)
\]
uniformly with respect to $\xi$ as $\varepsilon$ goes to zero.\end{lem}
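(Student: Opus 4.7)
The plan is to adapt the proof of Lemma \ref{lem:espJ2}, which gave the $C^0$ expansion of $J_\varepsilon(W_{\varepsilon,\xi})$, by redoing that computation with the basepoint $\xi(y)$ and then differentiating in $y_h$ at $y=0$. The natural first move is to shift the $y$-dependence off of the spike profile and onto the metric: I parametrize $J_\varepsilon(W_{\varepsilon,\xi(y)})$ in the Fermi coordinates centered at $\xi(y)$ itself. In these coordinates, $W_{\varepsilon,\xi(y)}(\psi_{\xi(y)}^{\partial}(\eta))=U_\varepsilon(\eta)\chi_R(\eta)$ is independent of $y$, and the whole $y$-dependence of $J_\varepsilon(W_{\varepsilon,\xi(y)})$ resides in the metric coefficients $g_{\xi(y)}^{ij}(\eta)$ and $\sqrt{g_{\xi(y)}}(\eta)$. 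After rescaling $\eta=\varepsilon z$, $\partial_{y_h}$ commutes with the integral and acts only on these metric coefficients.

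By (\ref{eq:g1})--(\ref{eq:g3}), the linear-in-$w_n$ parts of the expansions carry the coefficients $h_{ij}(\xi(y))$ and $H(\xi(y))$, and the $O(|w|^2)$ remainders depend smoothly on $\xi(y)$. Thus at $y=0$
\[
\partial_{y_h} g_{\xi(y)}^{ij}(\varepsilon z)\big|_{0}=2\varepsilon z_n\,\partial_{y_h}(h_{ij}(\xi(y)))\big|_{0}+O(\varepsilon^2|z|^2),
\]
and an analogous identity with $H$ in place of $h_{ij}$ for $\sqrt{g_{\xi(y)}}$. Plugging this into the differentiated integrand and repeating, verbatim, the manipulations of Lemma \ref{lem:espJ2} --- parity kills the off-diagonal $i\neq j$ contributions, diagonal symmetry and (\ref{eq:z1-zn}) reduce the surviving integrals to $\int(U'/|z|)^2 z_n^3\,dz$, (\ref{eq:H}) converts $\sum_{i<n}\partial_{y_h}(h_{ii}(\xi(y)))|_{0}$ into $(n-1)\partial_{y_h}(H(\xi(y)))|_{0}$, and Lemma \ref{lem:dUdzn} rewrites $\int z_n(\tfrac{1}{2}|\nabla U|^2+\tfrac{1}{2}U^2-\tfrac{1}{p}U^p)$ as $\int(U'/|z|)^2 z_n^3$ --- the two surviving contributions combine to $-\varepsilon\alpha\,\partial_{y_h}(H(\xi(y)))|_{0}$, which is the claim.

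The main point to verify is that the $O(|w|^2)$ remainder in the metric expansion is $C^1$-smooth in the basepoint, with its $y$-derivative still of order $|w|^2$ uniformly over the compact boundary $\partial M$; this follows from the standard smooth dependence of Fermi coordinates and Christoffel symbols on the basepoint. Once this is in hand, evaluating at $w=\varepsilon z$ and integrating against the exponentially decaying $U$, $|\nabla U|$, $U^{p-1}$ produces only an $O(\varepsilon^2)=o(\varepsilon)$ error, uniformly in $\xi$, which matches the asserted remainder.
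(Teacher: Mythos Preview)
Your argument is correct, and it takes a genuinely different route from the paper's.

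The paper fixes Fermi coordinates at $\xi_0=\xi(0)$ and differentiates $W_{\varepsilon,\xi(y)}$ itself, via Lemma~\ref{lem:derWeps} and the auxiliary maps $\tilde{\mathcal E},\tilde{\mathcal H}$ of Lemmas~\ref{lem:Estorto} and~\ref{lem:Estorto2}. Because $\partial_{y_h}W_{\varepsilon,\xi(y)}$ carries a factor $1/\varepsilon$, the expansion must be pushed one order further: the gradient part is split into seven pieces $I_1,\dots,I_7$, most of which are shown to vanish by integration by parts, parity, and the identities $\partial^2_{y_j\eta_h}\tilde{\mathcal E}_k(0,0)=0$ and $\partial_{\eta_n}\tilde{\mathcal H}_k=0$; the surviving pieces $I_3$ and $I_6$ then combine with the $U^2$ and $U^p$ terms exactly as in Lemma~\ref{lem:espJ2}. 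Your device of using Fermi coordinates centered at the moving point $\xi(y)$ freezes $U_\varepsilon\chi_R$ and pushes all $y$--dependence into $g_{\xi(y)}^{ij}$ and $|g_{\xi(y)}|^{1/2}$, so no $1/\varepsilon$ ever appears and the lemma reduces to differentiating the $C^0$ expansion of Lemma~\ref{lem:espJ2} term by term. The only extra ingredient you need is the one you flag: that the second--order Taylor remainder of the metric in Fermi coordinates is $C^1$ in the basepoint with $y$--derivative still $O(|\eta|^2)$, which follows from joint smoothness of $(y,\eta)\mapsto g_{\xi(y)}(\eta)$ and the fact that both the value and the first $\eta$--derivatives of the remainder vanish at $\eta=0$ for every $y$. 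Your approach is shorter and bypasses Lemmas~\ref{lem:derWeps}, \ref{lem:Estorto}, \ref{lem:Estorto2}; the paper's approach is more hands--on and makes every cancellation explicit. Note also that although the individual coefficients $h_{ij}(\xi(y))$ depend on the moving Fermi frame, after the parity reduction only the trace $\sum_i h_{ii}=(n-1)H(\xi(y))$ enters, and this is frame--independent, so its $y$--derivative is unambiguously $(n-1)\partial_{y_h}H(\xi(y))$.
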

\begin{proof}
For simplicity, we prove the claim for $h=1$. Cases $h=2,\dots,n-1$
are straightforward. Let us consider first
\begin{eqnarray*}
\frac{\partial}{\partial y_{1}}\left.\int_{I_{\xi}(R)}\frac{1}{2\varepsilon^{n}}W_{\varepsilon,\xi(y)}^{2}d\mu_{g}\right|_{y=0} & = & \int_{I_{\xi}(R)}\frac{1}{\varepsilon^{n}}W_{\varepsilon,\xi}\frac{\partial}{\partial y_{1}}\left.W_{\varepsilon,\xi(y)}\right|_{y=0}d\mu_{g}
\end{eqnarray*}
by Lemma \ref{lem:derWeps} and by exponential decay of $U$ we have
\begin{multline*}
\frac{\partial}{\partial y_{1}}\left.\int\frac{1}{2\varepsilon^{n}}W_{\varepsilon,\xi(y)}^{2}d\mu_{g}\right|_{y=0}\\
=\int_{\mathbb{R}_{+}^{n}}U(z)\chi_{R}(\varepsilon z)\left[U(z)\frac{\partial\chi_{R}}{\partial z_{k}}(\varepsilon z)+\frac{1}{\varepsilon}\frac{\partial U}{\partial z_{k}}(z)\chi_{R}(\varepsilon z)\right]\frac{\partial}{\partial y_{1}}\left.\tilde{\mathcal{E}}(y,\varepsilon z)\right|_{y=0}|g(\varepsilon z)|^{1/2}dz\\
=\frac{1}{\varepsilon}\int_{\mathbb{R}_{+}^{n}}U(z)\frac{\partial U}{\partial z_{k}}(z)\frac{\partial}{\partial y_{1}}\left.\tilde{\mathcal{E}}(y,\varepsilon z)\right|_{y=0}|g(\varepsilon z)|^{1/2}dz+o(\varepsilon)\\
=\frac{1}{\varepsilon}\int_{\mathbb{R}_{+}^{n}}U(z)\frac{U'(z)}{|z|}z_{k}\frac{\partial}{\partial y_{1}}\left.\tilde{\mathcal{E}_{k}}(y,\varepsilon z)\right|_{y=0}|g(\varepsilon z)|^{1/2}dz+o(\varepsilon)
\end{multline*}
where $\tilde{\mathcal{E}}$ is defined in Definition \ref{def:Estorto}.
Expanding in $\varepsilon$, by Lemma \ref{lem:Estorto} and by (\ref{eq:g3})
we obtain
\begin{multline*}
\frac{\partial}{\partial y_{1}}\left.\int\frac{1}{2\varepsilon^{n}}W_{\varepsilon,\xi(y)}^{2}d\mu_{g}\right|_{y=0}\\
=\frac{1}{\varepsilon}\int_{\mathbb{R}_{+}^{n}}U(z)\frac{U'(z)}{|z|}z_{k}(-\delta_{1k}+\frac{1}{2}\varepsilon^{2}E_{ij}^{k}z_{i}z_{j})(1-\varepsilon(n-1)Hz_{n}+\frac{1}{2}\varepsilon^{2}G_{ls}z_{l}z_{s})dz+o(\varepsilon)\\
=\frac{1}{\varepsilon}\int_{\mathbb{R}_{+}^{n}}U(z)\frac{U'(z)}{|z|}\left[-z_{1}(1-\varepsilon(n-1)Hz_{n}+\frac{1}{2}\varepsilon^{2}G_{ls}z_{l}z_{s})+\frac{1}{2}z_{k}\varepsilon^{2}E_{ij}^{k}z_{i}z_{j}\right]dz+o(\varepsilon)
\end{multline*}
where $E_{ij}^{k}=\frac{\partial^{2}}{\partial z_{i}\partial z_{j}}\frac{\partial}{\partial y_{1}}\left.\tilde{\mathcal{E}_{k}}(y,z)\right|_{y=0,z=0}$
and $G_{ls}=\left.\frac{\partial^{2}}{\partial z_{i}\partial z_{j}}|g(z)|^{1/2}\right|_{z=0}$.
By symmetry reason the only terms remaining are the ones containing
$z_{r}^{2}z_{n}$, thus
\[
\frac{\partial}{\partial y_{1}}\left.\int\frac{1}{2\varepsilon^{n}}W_{\varepsilon,\xi(y)}^{2}d\mu_{g}\right|_{y=0}=\varepsilon\int_{\mathbb{R}_{+}^{n}}U(z)\frac{U'(z)}{|z|}\left[-z_{1}G_{n1}z_{n}z_{1}+z_{k}E_{kn}^{k}z_{k}z_{n}\right]dz+o(\varepsilon).
\]
By (\ref{eq:G}) we have that $G_{n1}(0)=-(n-1)\frac{\partial H}{\partial z_{1}}(0)$
and in light of Lemma \ref{lem:Estorto2} we get $E_{kn}^{k}=0.$
We conclude that
\begin{align*}
\frac{\partial}{\partial y_{1}}\left.\int\frac{1}{2\varepsilon^{n}}W_{\varepsilon,\xi(y)}^{2}d\mu_{g}\right|_{y=0} & =\varepsilon\int_{\mathbb{R}_{+}^{n}}U(z)\frac{U'(z)}{|z|}(n-1)\left(\frac{\partial H}{\partial z_{1}}(0)\right)z_{n}z_{1}^{2}dz+o(\varepsilon)\\
 & =\varepsilon\int_{\mathbb{R}_{+}^{n}}\frac{\partial}{\partial z_{1}}\left(\frac{1}{2}U^{2}(z)\right)(n-1)\left(\frac{\partial H}{\partial z_{1}}(0)\right)z_{n}z_{1}dz+o(\varepsilon)\\
 & =-\varepsilon(n-1)\frac{\partial H}{\partial z_{1}}(0)\int_{\mathbb{R}_{+}^{n}}\frac{1}{2}U^{2}(z)z_{n}dz+o(\varepsilon)
\end{align*}
 In the same way we get that
\begin{align*}
\frac{\partial}{\partial y_{1}}\left.\int\frac{1}{p\varepsilon^{n}}W_{\varepsilon,\xi(y)}^{p}d\mu_{g}\right|_{y=0} &
 =\varepsilon\int_{\mathbb{R}_{+}^{n}}U^{p-1}(z)\frac{U'(z)}{|z|}(n-1)
 \left(\frac{\partial}{\partial z_{1}}H(0)\right)z_{n}z_{1}^{2}dz+o(\varepsilon)\\
 & =-\varepsilon(n-1)\frac{\partial H}{\partial z_{1}}(0)\int_{\mathbb{R}_{+}^{n}}\frac{1}{p}U^{p}(z)z_{n}dz+o(\varepsilon)
\end{align*}
Now we look at the last term
\[
I:=\frac{\partial}{\partial y_{1}}\left.\int_{I_{\xi}(R)}\frac{\varepsilon^{2}}{2\varepsilon^{n}}
\left|\nabla W_{\varepsilon,\xi(y)}\right|^{2}_{g}d\mu_{g}\right|_{y=0}=
\int_{I_{\xi}(R)}\frac{\varepsilon^{2}}{\varepsilon^{n}}
g\left(\nabla W_{\varepsilon,\xi}\nabla \left.\frac{\partial}{\partial y_{1}}W_{\varepsilon,\xi(y)}\right)\right|_{y=0}d\mu_{g}
\]
and again, using Lemma \ref{lem:derWeps} and the decay of $U$ we
have
\[
I=\frac{1}{\varepsilon}\int_{\mathbb{R}_{+}^{n}}g^{ij}(\varepsilon z)\frac{\partial U}{\partial z_{i}}\frac{\partial}{\partial z_{j}}\left(\frac{\partial U}{\partial z_{k}}\frac{\partial}{\partial y_{1}}\left.\tilde{\mathcal{E}_{k}}(y,\varepsilon z))\right|_{y=0}\right)|g(\varepsilon z)|^{1/2}dz+o(\varepsilon)
\]
Recalling (\ref{eq:g1}) (\ref{eq:g2}) and (\ref{eq:g3}), and set,
with abuse of language, $h_{in}=h_{nj}=0$ for all $i,j=1,\dots,n$
we have
\begin{eqnarray*}
I & = & \frac{1}{\varepsilon}\int_{\mathbb{R}_{+}^{n}}(\delta_{ij}+2\varepsilon h_{ij}z_{n}+\frac{1}{2}\varepsilon^{2}\gamma_{rt}^{ij}z_{r}z_{t})(1-\varepsilon(n-1)Hz_{n}+\frac{1}{2}\varepsilon^{2}G_{ls}z_{l}z_{s})\times\\
 &  & \times\frac{\partial U}{\partial z_{i}}\frac{\partial}{\partial z_{j}}\left(\frac{\partial U}{\partial z_{k}}(-\delta_{1k}+\frac{1}{2}\varepsilon^{2}E_{vw}^{k}z_{v}z_{w})\right)dz+o(\varepsilon)
\end{eqnarray*}
where $E_{vw}^{k}=\frac{\partial^{2}}{\partial z_{v}\partial z_{w}}\frac{\partial}{\partial y_{1}}\left.\tilde{\mathcal{E}_{k}}(y,z)\right|_{y=0,z=0}$
, $G_{ls}=\left.\frac{\partial^{2}}{\partial z_{l}\partial z_{s}}|g(z)|^{1/2}\right|_{z=0}$
and $\gamma_{rt}^{ij}=\left.\frac{\partial^{2}}{\partial z_{r}\partial z_{t}}g^{ij}(z)\right|_{z=0}$.

More explicitly
\begin{align*}
I= & -\frac{1}{\varepsilon}\int_{\mathbb{R}_{+}^{n}}\frac{\partial U}{\partial z_{i}}\frac{\partial}{\partial z_{i}}\frac{\partial U}{\partial z_{1}}dz+\int_{\mathbb{R}_{+}^{n}}(n-1)Hz_{n}\frac{\partial U}{\partial z_{i}}\frac{\partial}{\partial z_{i}}\frac{\partial U}{\partial z_{1}}dz\\
 & -\frac{1}{2}\varepsilon\int_{\mathbb{R}_{+}^{n}}G_{ls}z_{l}z_{s}\frac{\partial U}{\partial z_{i}}\frac{\partial}{\partial z_{i}}\frac{\partial U}{\partial z_{1}}dz-2\int_{\mathbb{R}_{+}^{n}}h_{ij}z_{n}\frac{\partial U}{\partial z_{i}}\frac{\partial}{\partial z_{j}}\frac{\partial U}{\partial z_{1}}dz\\
 & +2\varepsilon\int_{\mathbb{R}_{+}^{n}}(n-1)Hh_{ij}z_{n}^{2}\frac{\partial U}{\partial z_{i}}\frac{\partial}{\partial z_{j}}\frac{\partial U}{\partial z_{1}}dz-\frac{1}{2}\varepsilon\int_{\mathbb{R}_{+}^{n}}\gamma_{rt}^{ij}z_{r}z_{t}\frac{\partial U}{\partial z_{i}}\frac{\partial}{\partial z_{j}}\frac{\partial U}{\partial z_{1}}dz\\
 & +\frac{1}{2}\varepsilon\int_{\mathbb{R}_{+}^{n}}\frac{\partial U}{\partial z_{i}}\frac{\partial}{\partial z_{i}}\left(\frac{\partial U}{\partial z_{k}}E_{vw}^{k}z_{v}z_{w}\right)dz+o(\varepsilon)\\
 & :=I_{1}+I_{2}+I_{3}+I_{4}+I_{5}+I_{6}+I_{7}.
\end{align*}
Easily we have 
\[
I_{1}=-\frac{1}{\varepsilon}\int_{\mathbb{R}_{+}^{n}}\frac{\partial U}{\partial z_{i}}\frac{\partial}{\partial z_{i}}\frac{\partial U}{\partial z_{1}}dz=-\frac{1}{2\varepsilon}\int_{\mathbb{R}_{+}^{n}}\frac{\partial}{\partial z_{1}}|\nabla U|^{2}dz=0,
\]
and, in a similar way, by integration by parts
\[
I_{2}=(n-1)H(0)\frac{1}{2}\int_{\mathbb{R}_{+}^{n}}z_{n}\frac{\partial}{\partial z_{1}}|\nabla U|^{2}dz=0
\]
\[
I_{3}=-\frac{1}{4}\varepsilon\int_{\mathbb{R}_{+}^{n}}G_{ls}(0)z_{l}z_{s}\frac{\partial}{\partial z_{1}}|\nabla U|^{2}dz=\varepsilon\frac{1}{2}\int_{\mathbb{R}_{+}^{n}}G_{1s}(0)z_{s}|\nabla U|^{2}dz.
\]
Moreover, by symmetry reasons, the only non zero contribution comes
from the term containing $z_{n}$, so, by (\ref{eq:G}), 
\[
I_{3}=\varepsilon\frac{1}{2}\int_{\mathbb{R}_{+}^{n}}G_{1n}(0)z_{n}|\nabla U|^{2}dz=-\varepsilon(n-1)\frac{\partial H}{\partial z_{1}}(0)\int_{\mathbb{R}_{+}^{n}}\frac{1}{2}|\nabla U|^{2}z_{n}dz.
\]
Since $h_{ij}$ is symmetric, we have 
\[
I_{4}=-\int_{\mathbb{R}_{+}^{n}}h_{ij}(0)z_{n}\frac{\partial}{\partial z_{1}}\left(\frac{\partial U}{\partial z_{i}}\frac{\partial U}{\partial z_{j}}\right)dz=0
\]
by integration by parts and, in a similar way, we obtain also that
$I_{5}=0$. 

For $I_{7}$ it holds
\[
I_{7}=\frac{1}{2}\varepsilon\int_{\mathbb{R}_{+}^{n}}\frac{\partial U}{\partial z_{i}}\frac{\partial U}{\partial z_{i}\partial z_{k}}E_{vw}^{k}z_{v}z_{w}dz+\frac{1}{2}\varepsilon\int_{\mathbb{R}_{+}^{n}}\frac{\partial U}{\partial z_{i}}\frac{\partial U}{\partial z_{k}}\left(E_{vi}^{k}z_{v}+E_{iw}^{k}z_{w}\right)dz
\]
and, since $ $$\frac{\partial U}{\partial z_{i}}=\frac{U'(z)}{|z|}z_{i}$
\begin{equation}
\int_{\mathbb{R}_{+}^{n}}\frac{\partial U}{\partial z_{i}}\frac{\partial U}{\partial z_{k}}E_{vi}^{k}z_{v}=\int_{\mathbb{R}_{+}^{n}}\left(\frac{U'(z)}{|z|}\right)^{2}E_{vi}^{k}z_{i}z_{k}z_{v}dz.\label{eq:zero}
\end{equation}
The only non zero integral are the ones of the form $z_{r}^{2}z_{n}$
and, since $E_{rr}^{n}=0$ (Lemma \ref{lem:Estorto2}), all the terms
in (\ref{eq:zero}) are $0$. With a similar argument, since $\frac{\partial U}{\partial z_{i}\partial z_{k}}=\frac{U'(z)}{|z|}\delta_{ik}+\frac{U''(z)|z|-U'}{|z|^{3}}z_{k}z_{i}$
we can conclude that $I_{7}=0$.

Finally let us consider $I_{6}.$ Since $g^{ij}$ is symmetric, integrating
by parts we have
\[
I_{6}=-\frac{1}{4}\varepsilon\int_{\mathbb{R}_{+}^{n}}\gamma_{rt}^{ij}z_{r}z_{t}\frac{\partial}{\partial z_{1}}\left(\frac{\partial U}{\partial z_{i}}\frac{\partial U}{\partial z_{j}}\right)dz=\frac{1}{2}\varepsilon\int_{\mathbb{R}_{+}^{n}}\gamma_{1t}^{ij}\left(\frac{U'(z)}{|z|}\right)^{2}z_{i}z_{j}z_{t}dz
\]
By (\ref{eq:g2}) we have that $\gamma_{1t}^{nj}=\gamma_{1t}^{in}=0$
for all $i,j,t=1,\dots,n$. In addition, for symmetry reasons, only
the term which contains $z_{r}^{2}z_{n}$ gives a non zero contribution,
and by (\ref{eq:g1}) and (\ref{eq:z1-zn}) 
\begin{align*}
I_{6} & =\frac{1}{2}\varepsilon\sum_{i=1}^{n-1}\int_{\mathbb{R}_{+}^{n}}\gamma_{1n}^{ii}\left(\frac{U'(z)}{|z|}\right)^{2}z_{i}^{2}z_{n}dz=\varepsilon\sum_{i=1}^{n-1}\frac{\partial h^{ii}}{\partial z_{1}}(0)\int_{\mathbb{R}_{+}^{n}}\left(\frac{U'(z)}{|z|}\right)^{2}z_{i}^{2}z_{n}dz.\\
 & =\varepsilon(n-1)\frac{\partial H}{\partial z_{1}}(0)\int_{\mathbb{R}_{+}^{n}}\left(\frac{U'(z)}{|z|}\right)^{2}z_{1}^{2}z_{n}dz=\frac{1}{2}\varepsilon(n-1)\frac{\partial H}{\partial z_{1}}(0)\int_{\mathbb{R}_{+}^{n}}\left(\frac{U'(z)}{|z|}\right)^{2}z_{n}^{3}dz.
\end{align*}
Concluding we have
\begin{eqnarray*}
\left(\frac{\partial}{\partial y_{1}}J_{\varepsilon}(W_{\varepsilon,\xi(y)})\right)_{|_{y=0}} & = & -\varepsilon(n-1)\frac{\partial H}{\partial z_{1}}(0)\int_{\mathbb{R}_{+}^{n}}\left[\frac{1}{2}|\nabla U|^{2}+\frac{1}{2}|U|^{2}-\frac{1}{p}|U|^{p}\right]z_{n}dz\\
 &  & +\frac{1}{2}\varepsilon(n-1)\frac{\partial H}{\partial z_{1}}(0)\int_{\mathbb{R}_{+}^{n}}\left(\frac{U'(z)}{|z|}\right)^{2}z_{n}^{3}dz.
\end{eqnarray*}
and, by Lemma \ref{lem:dUdzn}, 
\[
\left.\frac{\partial}{\partial y_{1}}J_{\varepsilon}(W_{\varepsilon,\xi(y)})\right|_{y=0}=-\frac{1}{2}\varepsilon(n-1)\frac{\partial H}{\partial z_{1}}(0)\int_{\mathbb{R}_{+}^{n}}\left(\frac{U'(z)}{|z|}\right)^{2}z_{n}^{3}dz
\]
which completes the proof.
\end{proof}

\appendix
\section{Technical lemmas}\label{app}

Here we collect a series of estimates that we used in the paper as well as the proof of some Lemma which was 
previously claimed.
\begin{lem}
\label{lem:dUdzn}It holds
\begin{multline*}
\int_{\mathbb{R}_{+}^{n}}\left(\partial_{z_{n}}U\right)^{2}z_{n}dz=\int_{\mathbb{R}_{+}^{n}}\left(\frac{U'(z)}{|z|}\right)^{2}z_{n}^{3}dz\\
=\frac{1}{2}\int_{\mathbb{R}_{+}^{n}}|\nabla U|^{2}z_{n}dz+\frac{1}{2}\int_{\mathbb{R}_{+}^{n}}U^{2}z_{n}dz-\frac{1}{p}\int_{\mathbb{R}_{+}^{n}}U^{p}z_{n}dz
\end{multline*}
\end{lem}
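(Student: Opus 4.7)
The first equality is immediate from the radial symmetry of $U$. Indeed, writing $U(z)=V(|z|)$, one has $\partial_{z_n}U(z)=V'(|z|)\,z_n/|z|=U'(|z|)z_n/|z|$, hence
\[
(\partial_{z_n}U)^{2}z_{n}=\left(\frac{U'(|z|)}{|z|}\right)^{2}z_{n}^{3}.
\]

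For the second equality I would derive a weighted Pohozaev-type identity by multiplying the equation $-\Delta U+U=U^{p-1}$ (valid on $\mathbb{R}_+^n$ with Neumann datum on $\{z_n=0\}$) by the test function $z_n^{2}\,\partial_{z_n}U$ and integrating over $\mathbb{R}_+^n$. The three resulting terms are handled independently by integration by parts, and in every case the boundary contribution on $\{z_n=0\}$ vanishes because of the factor $z_n^{2}$, while the contribution at infinity vanishes by the exponential decay of $U$ and $\nabla U$.

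Concretely, for the zero-order and nonlinear terms I write $U\,\partial_{z_n}U=\tfrac12\partial_{z_n}(U^{2})$ and $U^{p-1}\partial_{z_n}U=\tfrac1p\partial_{z_n}(U^{p})$, so that
\[
\int_{\mathbb{R}_+^n}z_n^{2}U\,\partial_{z_n}U\,dz=-\int_{\mathbb{R}_+^n}z_n U^{2}dz,\qquad \int_{\mathbb{R}_+^n}z_n^{2}U^{p-1}\partial_{z_n}U\,dz=-\frac{2}{p}\int_{\mathbb{R}_+^n}z_n U^{p}dz.
\]
For the Laplacian term I integrate by parts once to get
\[
-\int_{\mathbb{R}_+^n}z_n^{2}\partial_{z_n}U\,\Delta U\,dz=\int_{\mathbb{R}_+^n}z_n^{2}\,\nabla(\partial_{z_n}U)\cdot\nabla U\,dz+2\int_{\mathbb{R}_+^n}z_n(\partial_{z_n}U)^{2}dz,
\]
then use $\nabla(\partial_{z_n}U)\cdot\nabla U=\tfrac12\partial_{z_n}|\nabla U|^{2}$ and integrate once more in $z_n$ to obtain $-\int z_n|\nabla U|^{2}+2\int z_n(\partial_{z_n}U)^{2}$.

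Summing the three contributions and setting them equal to zero yields
\[
2\int_{\mathbb{R}_+^n}z_n(\partial_{z_n}U)^{2}dz=\int_{\mathbb{R}_+^n}z_n|\nabla U|^{2}dz+\int_{\mathbb{R}_+^n}z_n U^{2}dz-\frac{2}{p}\int_{\mathbb{R}_+^n}z_n U^{p}dz,
\]
which, after division by $2$, is exactly the claimed identity. The only delicate point is bookkeeping of signs and boundary terms in the repeated integrations by parts; once one checks that $z_n^{2}$ kills every boundary integral at $\{z_n=0\}$ and decay takes care of infinity, the argument is routine.
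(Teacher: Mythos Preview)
Your proof is correct and follows essentially the same route as the paper: multiply $-\Delta U+U=U^{p-1}$ by $z_n^{2}\partial_{z_n}U$, integrate over $\mathbb{R}_+^n$, and reduce each term via integration by parts (the paper arrives at the same three identities, your handling of the Laplacian term being somewhat more streamlined than the paper's chain of integrations). The first equality via $\partial_{z_n}U=U'(|z|)z_n/|z|$ is also exactly what is used there.
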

\begin{proof}
We multiply $-\Delta U$ by $z_{n}^{2}\partial_{z_{n}}U$, we integrate
over $\mathbb{R}_{+}^{n}$ and we integrate by parts, obtaining
\begin{align*}
-\int_{\mathbb{R}_{+}^{n}}\Delta Uz_{n}^{2}\partial_{z_{n}}Udz= & \int_{\mathbb{R}_{+}^{n}}\left(\Delta\partial_{z_{n}}U\right)z_{n}^{2}Udz+2\int_{\mathbb{R}_{+}^{n}}\Delta Uz_{n}Udz=\\
= & -\int_{\mathbb{R}_{+}^{n}}\sum_{i=1}^{n}\left(\partial_{z_{i}}\partial_{z_{n}}U\right)z_{n}^{2}\partial_{z_{i}}Udz-2\int_{\mathbb{R}_{+}^{n}}\sum_{i=1}^{n}\left(\partial_{z_{i}}\partial_{z_{n}}U\right)z_{n}\delta_{in}Udz\\
 & -2\int_{\mathbb{R}_{+}^{n}}\sum_{i=1}^{n}\left(\partial_{z_{i}}U\right)z_{n}\partial_{z_{i}}Udz-2\int_{\mathbb{R}_{+}^{n}}\sum_{i=1}^{n}\left(\partial_{z_{i}}U\right)\delta_{in}Udz=\\
= & -\int_{\mathbb{R}_{+}^{n}}\sum_{i=1}^{n}\left(\partial_{z_{i}}\partial_{z_{n}}U\right)z_{n}^{2}\partial_{z_{i}}Udz-2\int_{\mathbb{R}_{+}^{n}}\left(\partial_{z_{n}}^{2}U\right)z_{n}Udz\\
 & -2\int_{\mathbb{R}_{+}^{n}}|\nabla U|^{2}z_{n}dz-2\int_{\mathbb{R}_{+}^{n}}\left(\partial_{z_{n}}U\right)Udz.
\end{align*}
Now, again by integration by parts
\begin{align*}
-\int_{\mathbb{R}_{+}^{n}}\sum_{i=1}^{n}\left(\partial_{z_{i}}\partial_{z_{n}}U\right)z_{n}^{2}\partial_{z_{i}}Udz & =\int_{\mathbb{R}_{+}^{n}}\left(\partial_{z_{n}}U\right)z_{n}^{2}\Delta Udz+2\int_{\mathbb{R}_{+}^{n}}\sum_{i=1}^{n}\left(\partial_{z_{n}}U\right)\delta_{in}z_{n}\partial_{z_{i}}Udz\\
 & =\int_{\mathbb{R}_{+}^{n}}\left(\partial_{z_{n}}U\right)z_{n}^{2}\Delta Udz+2\int_{\mathbb{R}_{+}^{n}}\left(\partial_{z_{n}}U\right)^{2}z_{n}dz,
\end{align*}
thus
\begin{align*}
-\int_{\mathbb{R}_{+}^{n}}\Delta Uz_{n}^{2}\partial_{z_{n}}Udz= & \int_{\mathbb{R}_{+}^{n}}\left(\partial_{z_{n}}U\right)z_{n}^{2}\Delta Udz+2\int_{\mathbb{R}_{+}^{n}}\left(\partial_{z_{n}}U\right)^{2}z_{n}dz\\
 & -2\int_{\mathbb{R}_{+}^{n}}\left(\partial_{z_{n}}^{2}U\right)z_{n}Udz-2\int_{\mathbb{R}_{+}^{n}}|\nabla U|^{2}z_{n}dz-2\int_{\mathbb{R}_{+}^{n}}\left(\partial_{z_{n}}U\right)Udz
\end{align*}
that is 
\begin{align*}
-\int_{\mathbb{R}_{+}^{n}}\Delta Uz_{n}^{2}\partial_{z_{n}}Udz= & \int_{\mathbb{R}_{+}^{n}}\left(\partial_{z_{n}}U\right)^{2}z_{n}dz-\int_{\mathbb{R}_{+}^{n}}\left(\partial_{z_{n}}^{2}U\right)z_{n}Udz\\
 & -\int_{\mathbb{R}_{+}^{n}}|\nabla U|^{2}z_{n}dz-\int_{\mathbb{R}_{+}^{n}}\left(\partial_{z_{n}}U\right)Udz.
\end{align*}
Now
\[
0=\int_{\mathbb{R}_{+}^{n}}\partial_{z_{n}}\left(z_{n}U\partial_{z_{n}}U\right)dz=\int_{\mathbb{R}_{+}^{n}}U\partial_{z_{n}}U+z_{n}\left(\partial_{z_{n}}U\right)^{2}+z_{n}U\partial_{z_{n}}^{2}Udz,
\]
and we get
\begin{equation}
-\int_{\mathbb{R}_{+}^{n}}\Delta Uz_{n}^{2}\partial_{z_{n}}Udz=2\int_{\mathbb{R}_{+}^{n}}\left(\partial_{z_{n}}U\right)^{2}z_{n}dz-\int_{\mathbb{R}_{+}^{n}}|\nabla U|^{2}z_{n}dz.\label{eq:12.1}
\end{equation}

In a similar way we prove that 
\begin{eqnarray}
\int_{\mathbb{R}_{+}^{n}}Uz_{n}^{2}\partial_{z_{n}}Udz & = & -\int_{\mathbb{R}_{+}^{n}}U^{2}z_{n}dz,\label{eq:12.2}\\
\int_{\mathbb{R}_{+}^{n}}U^{p-1}z_{n}^{2}\partial_{z_{n}}Udz & = & -\frac{2}{p}\int_{\mathbb{R}_{+}^{n}}U^{p}z_{n}dz.\label{eq:12.3}
\end{eqnarray}
Now, multiplicating by $z_{n}^{2}\partial_{z_{n}}U$ both terms of
(\ref{eq:P-Rn}), integrating over $\mathbb{R}_{+}^{n}$ and using
(\ref{eq:12.1}), (\ref{eq:12.2}), (\ref{eq:12.3}) we finally obtain
the claim.
\end{proof}
The following lemma collects several estimates on $Z_{\varepsilon,\xi}^{j}$
.
\begin{lem}
There exists $\varepsilon_{0}>0$ and $c>0$ such that, for any $\xi_{0}\in\partial M$
and for any $\varepsilon\in(0,\varepsilon_{0})$ it holds
\begin{equation}
\|Z_{\varepsilon,\xi}^{h}-i^{*}\left[f'(W_{\varepsilon,\xi})Z_{\varepsilon,\xi}^{h}\right]\|_{\varepsilon}\le c\varepsilon^{1+\frac{N}{p'}}\label{eq:Ziistella}
\end{equation}
\begin{equation}
\left\Vert \frac{\partial}{\partial y_{h}}Z_{\varepsilon,\xi(y)}^{l}\right\Vert _{\varepsilon}=O\left(\frac{1}{\varepsilon}\right),\ \ \left\Vert \frac{\partial}{\partial y_{h}}W_{\varepsilon,\xi(y)}\right\Vert _{\varepsilon}=O\left(\frac{1}{\varepsilon}\right),\label{eq:lemma61}
\end{equation}
\begin{equation}
\left\langle Z_{\varepsilon,\xi_{0}}^{l},\left(\frac{\partial}{\partial y_{h}}W_{\varepsilon,\xi(y)}\right)_{|_{y=0}}\right\rangle =-\frac{1}{\varepsilon}c\delta_{lh}+o\left(\frac{1}{\varepsilon}\right),\label{eq:lemma62}
\end{equation}
\begin{equation}
\left\Vert \frac{1}{\varepsilon}Z_{\varepsilon,\xi_{0}}^{h}+\left(\frac{\partial}{\partial y_{h}}W_{\varepsilon,\xi(y)}\right)_{|_{y=0}}\right\Vert _{\varepsilon}\le c\varepsilon\label{eq:lemma63}
\end{equation}
for $h=1,\dots,n-1$, $l=1,\dots,n$\end{lem}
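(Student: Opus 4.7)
The plan is to reduce all four estimates to computations in Fermi coordinates and to bound the pulled-back integrals using the exponential decay of $U$ and its derivatives, the local expansions \eqref{eq:g1}--\eqref{eq:g3}, and Lemmas \ref{lem:derWeps}, \ref{lem:Estorto}, \ref{lem:Estorto2}. In every case the strategy is: write the quantity of interest through Lemma \ref{lem:derWeps} or the defining formula of $Z_{\varepsilon,\xi}^{h}$, rescale $y=\varepsilon z$ so that the $1/\varepsilon^{n}$ factor in the $H_\varepsilon$ norm cancels the Jacobian, and then Taylor-expand the metric and the geodesic maps.

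For \eqref{eq:Ziistella} I would proceed as in Lemma \ref{lem:Reps}. Defining $\tilde V_{\varepsilon,\xi}^{h}$ by $Z_{\varepsilon,\xi}^{h}=i_\varepsilon^{*}(\tilde V_{\varepsilon,\xi}^{h})$, Remark \ref{rem:ieps} gives $\|Z_{\varepsilon,\xi}^{h}-i_\varepsilon^{*}[f'(W_{\varepsilon,\xi})Z_{\varepsilon,\xi}^{h}]\|_\varepsilon\le c\,|\tilde V_{\varepsilon,\xi}^{h}-f'(W_{\varepsilon,\xi})Z_{\varepsilon,\xi}^{h}|_{p',\varepsilon}$. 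Expanding $-\varepsilon^{2}\Delta_g$ via the same formula as in the proof of Lemma \ref{lem:Reps} and using that $\varphi^{h}$ solves the linear problem \eqref{eq:linear}, the integrand splits into a metric-error term containing $g^{ij}-\delta_{ij}$ and the Christoffel symbols, cutoff commutators of the form $\varepsilon^{2}\varphi^{h}_\varepsilon\Delta\chi_R$ and $\varepsilon^{2}\nabla\varphi^{h}_\varepsilon\cdot\nabla\chi_R$, and a nonlinearity error $[f'(U_\varepsilon)-f'(W_{\varepsilon,\xi})]\varphi^{h}_\varepsilon\chi_R$. The cutoff and nonlinearity errors are supported where $|z|\gtrsim R/\varepsilon$, hence exponentially small; the metric correction $|g^{ij}(\varepsilon z)-\delta_{ij}|=O(\varepsilon|z|)$, paired against the $L^{p'}$ norm of the second derivatives of $\varphi^{h}$, yields the claimed $\varepsilon^{1+n/p'}$, exactly as in Lemma \ref{lem:Reps}.

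The bounds \eqref{eq:lemma61} are obtained by a direct rescaling. From Lemma \ref{lem:derWeps}, $\partial_{y_h}W_{\varepsilon,\xi(y)}|_{y=0}$ is a linear combination of profiles $\tfrac{1}{\varepsilon}\chi_R(\varepsilon z)\,\partial_{z_k}U(z)+U(z)\,\partial_{z_k}\chi_R(\varepsilon z)$ weighted by $\partial_{y_j}\tilde{\mathcal E}_{k}$, whose entries are bounded uniformly in $\varepsilon$ by Lemma \ref{lem:Estorto}. After the change $y=\varepsilon z$ the $1/\varepsilon^{n}$ in $\|\cdot\|_\varepsilon^{2}$ cancels the Jacobian and the $\varepsilon^{2}|\nabla|^{2}$ factor cancels a further $\varepsilon^{-2}$, so the overall $O(1/\varepsilon)$ comes purely from the $1/\varepsilon$ prefactor of the leading piece; exponential decay of $U$ handles integrability. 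The bound for $\partial_{y_h}Z_{\varepsilon,\xi(y)}^{l}$ is identical after replacing $U$ by $\partial_{z_l}U$.

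For \eqref{eq:lemma63}, which immediately implies \eqref{eq:lemma62}, I would Taylor-expand $\partial_{y_h}\tilde{\mathcal E}_{k}(0,\varepsilon\bar z)$. Lemma \ref{lem:Estorto} yields $\partial_{y_h}\tilde{\mathcal E}_{k}(0,0)=-\delta_{hk}$ together with vanishing of the mixed second derivative at the origin, so $\partial_{y_h}\tilde{\mathcal E}_{k}(0,\varepsilon\bar z)=-\delta_{hk}+O(\varepsilon^{2}|\bar z|^{2})$. Substituting in Lemma \ref{lem:derWeps} and comparing with $\tfrac{1}{\varepsilon}Z_{\varepsilon,\xi_0}^{h}=\tfrac{1}{\varepsilon}\chi_R(\varepsilon z)\,\partial_{z_h}U(z)$, the $\varepsilon^{-1}$ terms cancel; what remains is a cutoff-derivative piece supported where $U$ is exponentially small, plus an $O(\varepsilon|z|^{2})$ perturbation of the leading profile, both of which are $O(\varepsilon)$ in $H_\varepsilon$-norm after the rescaling. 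Pairing with $Z_{\varepsilon,\xi_0}^{l}$ and invoking the standard limit $\langle Z_{\varepsilon,\xi_0}^{l},Z_{\varepsilon,\xi_0}^{h}\rangle_\varepsilon\to c\,\delta_{lh}$, which follows from change of variables and the orthogonality $\int_{\mathbb{R}^{n}_+}\partial_{z_l}U\,\partial_{z_h}U\,dz=\tilde c\,\delta_{lh}$, then gives \eqref{eq:lemma62}. The principal obstacle is \eqref{eq:Ziistella}, which requires the Lemma \ref{lem:Reps}-style accounting of the defect of $Z_{\varepsilon,\xi}^{h}$ as an approximate eigenfunction of the $\varepsilon$-linearized operator; the remaining three estimates reduce cleanly to the geometric content of Lemmas \ref{lem:Estorto}, \ref{lem:Estorto2} and the scaling of the $H_\varepsilon$-norm.
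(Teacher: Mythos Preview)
Your proposal is correct and follows essentially the same route as the paper, which simply says that \eqref{eq:Ziistella} is proved as in Lemma~\ref{lem:Reps} and that the remaining three estimates are handled as in Lemmas~6.1--6.3 of \cite{MP09}; you have spelled out precisely these arguments via the Fermi-coordinate rescaling, the expansions \eqref{eq:g1}--\eqref{eq:g3}, and Lemmas~\ref{lem:derWeps}, \ref{lem:Estorto}, \ref{lem:Estorto2}. Your observation that \eqref{eq:lemma63} together with \eqref{eq:ZiZj} immediately yields \eqref{eq:lemma62} is also correct.
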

\begin{proof}
The proof of (\ref{eq:Ziistella}) is similar to Lemma \ref{lem:Reps}
and will be omitted. The other three estimates are similar to Lemma
6.1, Lemma 6.2 and Lemma 6.3 of \cite{MP09}, which we refer to for
the proof of the claim.
\end{proof}


\begin{proof}[Proof of Lemma \ref{lem:Linv}.]
By contradiction we assume that there exist sequences $\varepsilon_{k}\rightarrow0$,
$\xi_{k}\in\partial M$ with $\xi_{k}\rightarrow\xi\in\partial M$
and $\phi_{k}\in K_{\varepsilon_{k},\xi_{k}}^{\bot}$ with $\|\phi\|_{\varepsilon_{k}}=1$
such that 
\[
L_{\varepsilon_{k},\xi_{k}}(\phi_{k})=\psi_{k}\text{ with }\|\psi_{k}\|_{\varepsilon_{k}}\rightarrow0\text{ for }k\rightarrow+\infty.
\]
By definition of $L_{\varepsilon_{k},\xi_{k}}$, there exists $\zeta_{k}\in K_{\varepsilon_{k},\xi_{k}}$
such that
\begin{equation}
\phi_{k}-i_{\varepsilon_{k}}^{*}\left[f'(W_{\varepsilon_{k},\xi_{k}})\phi_{k}\right]=\psi_{k}+\zeta_{k}.\label{eq:zetagreca}
\end{equation}

We prove that $\|\zeta_{k}\|_{\varepsilon_{k}}\rightarrow0$ for $k\rightarrow+\infty$.
Let ${\displaystyle \zeta_{k}=\sum_{j=1}^{n-1}a_{j}^{k}Z_{\varepsilon_{k},\xi_{k}}^{j}}$,
$Z_{\varepsilon,\xi}^{i}$ being defined in (\ref{eq:Zi}). By (\ref{eq:zetagreca}),
using that $\phi_{k},\psi_{k}\in K_{\varepsilon_{k},\xi_{k}}^{\bot}$
we have
\begin{eqnarray}
\sum_{j=1}^{n-1}a_{j}^{k}\left\langle Z_{\varepsilon_{k},\xi_{k}}^{j},Z_{\varepsilon_{k},\xi_{k}}^{h}\right\rangle _{\varepsilon_{k}} & = & -\left\langle i_{\varepsilon_{k}}^{*}\left[f'(W_{\varepsilon_{k},\xi_{k}})\phi_{k}\right],Z_{\varepsilon_{k},\xi_{k}}^{h}\right\rangle _{\varepsilon_{k}}=\nonumber \\
 & = & -\frac{1}{\varepsilon_{k}^{n}}\int_{M}f'(W_{\varepsilon_{k},\xi_{k}})\phi_{k}Z_{\varepsilon_{k},\xi_{k}}^{h}d\mu_{g}.\label{eq:aj}
\end{eqnarray}
By elementary properties of $\varphi^{j}$ we have that
\begin{equation}
\begin{array}[t]{lll}
\left\langle Z_{\varepsilon_{k},\xi_{k}}^{j},Z_{\varepsilon_{k},\xi_{k}}^{j}\right\rangle _{\varepsilon_{k}}=C\delta_{jh}+o(1) &  & \text{for all }j,h=1,\dots,n-1\end{array}\label{eq:ZiZj}
\end{equation}
where $C$ is a positive constant.

We set 
\[
\tilde{\phi}_{k}:=\left\{ \begin{array}{cc}
\phi_{k}\left(\psi_{\xi_{k}}^{\partial}(\varepsilon_{k}z)\right)\chi_{R}(\varepsilon_{k}z) & \text{ if }z\in D^{+}(R)\\
\\
0 & \text{otherwise}
\end{array}\right.
\]
Easily we get that $\|\tilde{\phi}_{k}\|_{H^{1}(\mathbb{R}^{n})}\le c\|\phi_{k}\|_{\varepsilon_{k}}\le c$
for some positive constant $c$. Thus, there exists $\tilde{\phi}\in H^{1}(\mathbb{R}^{n})$
such that $\tilde{\phi}_{k}\rightarrow\tilde{\phi}$ weakly in $H^{1}(\mathbb{R}^{n})$
and strongly in $L_{\text{loc}}^{p}(\mathbb{R}^{n})$ for all $2\le p<2^{*}$
if $n\ge3$ or $p\ge2$ if $n=2$.

We recall that $\phi_{k}\in K_{\varepsilon_{k},\xi_{k}}^{\bot}$,
so 
\begin{multline*}
-\frac{1}{\varepsilon_{k}^{n}}\int_{M}f'(W_{\varepsilon_{k},\xi_{k}})\phi_{k},Z_{\varepsilon_{k},\xi_{k}}^{h}d\mu_{g}=\left\langle \phi_{k},Z_{\varepsilon_{k},\xi_{k}}^{h}\right\rangle _{\varepsilon}-\frac{1}{\varepsilon_{k}^{n}}\int_{M}f'(W_{\varepsilon_{k},\xi_{k}})\phi_{k},Z_{\varepsilon_{k},\xi_{k}}^{h}d\mu_{g}\\
=\frac{1}{\varepsilon_{k}^{n}}\int_{M}\left[\varepsilon_{k}^{2}\nabla\phi_{k}\nabla Z_{\varepsilon_{k},\xi_{k}}^{h}+Z_{\varepsilon_{k},\xi_{k}}^{h}\phi_{k}-f'(W_{\varepsilon_{k},\xi_{k}})\phi_{k},Z_{\varepsilon_{k},\xi_{k}}^{h}\right]d\mu_{g}\\
=\int_{D^{+}(R/\varepsilon_{k})}\left[\sum_{l,m=1}^{n}g^{lm}(\varepsilon_{k}z)\frac{\partial\tilde{\phi}_{k}}{\partial z_{l}}\frac{\partial\left(\varphi^{h}(z)\right)}{\partial z_{m}}+\tilde{\phi}_{k}\varphi^{h}(z)\right]|g(\varepsilon_{k}z)|^{\frac{1}{2}}dy\\
-\int_{D^{+}(R/\varepsilon_{k})}f'(U(z)\chi_{R}(\varepsilon_{k}z))\tilde{\phi}_{k}\varphi^{h}(z)|g(\varepsilon_{k}z)|^{\frac{1}{2}}dz+o(1)\\
=\int_{\mathbb{R}^{n}}\nabla\tilde{\phi}\nabla\varphi^{h}+\tilde{\phi}\varphi^{h}-f'(U)\tilde{\phi}\varphi^{h}dz+o(1)=o(1)
\end{multline*}
because $\varphi^{h}$ is a weak solution of the linearized problem
(\ref{eq:linear}). So we can rewrite (\ref{eq:aj}), obtaining 
\[
ca_{h}^{k}+o(1)=\sum_{j=1}^{n-1}a_{j}^{k}\left\langle Z_{\varepsilon_{k},\xi_{k}}^{j},Z_{\varepsilon_{k},\xi_{k}}^{h}\right\rangle _{\varepsilon_{k}}=-\frac{1}{\varepsilon_{k}^{n}}\int_{M}f'(W_{\varepsilon_{k},\xi_{k}})\phi_{k},Z_{\varepsilon_{k},\xi_{k}}^{h}d\mu_{g}=o(1),
\]
so $a_{h}^{k}\rightarrow0$ for all $h$ while $k\rightarrow+\infty$,
thus $\|\zeta_{k}\|_{\varepsilon_{k}}\rightarrow0$ for $k\rightarrow+\infty$.

Setting $u_{k}:=\phi_{k}-\psi_{k}-\zeta_{k}$, (\ref{eq:zetagreca})
can be read as
\begin{equation}
\left\{ \begin{array}{cc}
-\varepsilon_{k}^{2}\Delta_{g}u_{k}+u_{k}=f'(W_{\varepsilon_{k},\xi_{k}})u_{k}+f'(W_{\varepsilon_{k},\xi_{k}})(\psi_{k}+\zeta_{k}) & \text{ in }M\\
\\
{\displaystyle \frac{\partial u_{k}}{\partial\nu}=0} & \text{ on }\partial M.
\end{array}\right.\label{eq:ukappa}
\end{equation}
Multiplying (\ref{eq:ukappa}) by $u_{k}$ and integrating by parts
we get
\begin{equation}
\|u_{k}\|_{\varepsilon_{k}}=\frac{1}{\varepsilon_{k}^{n}}\int_{M}f'(W_{\varepsilon_{k},\xi_{k}})u_{k}^{2}+f'(W_{\varepsilon_{k},\xi_{k}})(\psi_{k}+\zeta_{k})u_{k}.\label{eq:u1}
\end{equation}
By Holder inequality, and recalling that $|u|_{\varepsilon,p}\le c\|u\|_{\varepsilon}$,
we have 
\begin{align}
\frac{1}{\varepsilon_{k}^{n}}\int_{M}f'(W_{\varepsilon_{k},\xi_{k}})(\psi_{k}+\zeta_{k})u_{k}\le & \left(\frac{1}{\varepsilon_{k}^{n}}\int_{M}f'(W_{\varepsilon_{k},\xi_{k}})^{\frac{n}{2}}\right)^{\frac{2}{n}}|u_{k}|_{\varepsilon_{k},\frac{2n}{n-2}}^{\frac{n-2}{2n}}|\psi_{k}+\zeta_{k}|_{\varepsilon_{k},\frac{2n}{n-2}}^{\frac{n-2}{2n}}\nonumber \\
\le & c\left(\frac{1}{\varepsilon_{k}^{n}}\int_{M}f'(W_{\varepsilon_{k},\xi_{k}})^{\frac{n}{2}}\right)^{\frac{2}{n}}\|u_{k}\|_{\varepsilon_{k}}^{\frac{n-2}{2n}}\|\psi_{k}+\zeta_{k}\|_{\varepsilon_{k}}^{\frac{n-2}{2n}}.\label{eq:u2}
\end{align}
Now, 
\begin{align}
\frac{1}{\varepsilon_{k}^{n}}\int_{M}f'(W_{\varepsilon_{k},\xi_{k}})^{\frac{n}{2}}d\mu_{g} & \le\frac{1}{\varepsilon_{k}^{n}}\int_{I_{\xi_{k}}(R)}\left(U_{\varepsilon}\left(\left(\psi_{\xi_{k}}^{\partial}\right)^{-1}(x)\right)\right)^{\frac{n(p-2)}{2}}d\mu_{g}\nonumber \\
 & \le c\int_{D^{+}(R/\varepsilon)}\left(U\left(z\right)\right)^{\frac{n(p-2)}{2}}dz\le c\label{eq:u3}
\end{align}
for some positive constant $c$. 

Combining (\ref{eq:u1}), (\ref{eq:u2}), (\ref{eq:u3}), and recalling
that $\|u_{k}\|_{\varepsilon_{k}}\rightarrow1$, $\|\psi_{k}+\zeta_{k}\|_{\varepsilon_{k}}\rightarrow0$

while $k\rightarrow+\infty,$we get
\begin{equation}
\frac{1}{\varepsilon_{k}^{n}}\int_{M}f'(W_{\varepsilon_{k},\xi_{k}})u_{k}^{2}\rightarrow1\text{ while }k\rightarrow+\infty.\label{eq:lim1}
\end{equation}
We will see how this leads us to a contradiction. 

We set 
\[
\tilde{u}_{k}(z):=u_{k}\left(\psi_{\xi_{k}}^{\partial}(\varepsilon_{k}z)\right)\chi_{R}(\varepsilon_{k}z)\text{ for }z\in\mathbb{R}_{+}^{n}
\]

We have that 
\[
\|\tilde{u}_{k}\|_{H^{1}(\mathbb{R}^{n})}\le c\|u_{k}\|_{\varepsilon_{k}}\le c,
\]
so, up to subsequence, there exists $\tilde{u}\in H^{1}(\mathbb{R}_{+}^{n})$
such that $\tilde{u}_{k}\rightarrow\tilde{u}$ weakly in $H^{1}(\mathbb{R}_{+}^{n})$and
strongly in $L_{\text{loc}}^{p}(\mathbb{R}_{+}^{n})$, $p\in(2,2^{*})$
if $n\ge3$ or $p>2$ if $n=2$. By (\ref{eq:ukappa}) we deduce that
\begin{equation}
\left\{ \begin{array}{cc}
-\Delta\tilde{u}+\tilde{u}=f'(U)\tilde{u} & \text{ in }\mathbb{R}_{+}^{n}\\
\\
{\displaystyle \frac{\partial\tilde{u}}{\partial x_{n}}=0} & \text{ on }\left\{ x_{n}=0\right\} .
\end{array}\right.\label{eq:utilde}
\end{equation}

We prove also that 
\begin{equation}
\left\langle \varphi^{h},\tilde{u}\right\rangle _{H^{1}}=0\text{ for all }h\in1,\dots,n-1.\label{eq:scalare}
\end{equation}
In fact, since $\phi_{k},\psi_{k}\in K_{\varepsilon,\xi}^{\bot}$
and $\|\zeta_{k}\|_{\varepsilon_{k}}\rightarrow0$, we have
\begin{equation}
\left|\left\langle Z_{\varepsilon_{k},\xi_{k}}^{h},u_{k}\right\rangle _{\varepsilon_{k}}\right|=\left|\left\langle Z_{\varepsilon_{k},\xi_{k}}^{h},\zeta_{k}\right\rangle _{\varepsilon_{k}}\right|\le\|Z_{\varepsilon_{k},\xi_{k}}^{h}\|_{\varepsilon_{k}}\|\zeta_{k}\|_{\varepsilon_{k}}=o(1).\label{eq:zetau}
\end{equation}
On the other hand, by direct computation, we get 
\begin{eqnarray}
\left\langle Z_{\varepsilon_{k},\xi_{k}}^{h},u_{k}\right\rangle _{\varepsilon_{k}} & = & \frac{1}{\varepsilon_{k}^{n}}\int_{M}\varepsilon_{k}^{2}g(\nabla Z_{\varepsilon_{k},\xi_{k}}^{h}\nabla u_{k})+Z_{\varepsilon_{k},\xi_{k}}^{h}u_{k}\nonumber \\
 & = & \int_{D^{+}(R/\varepsilon_{k})}\sum_{l,m=1}^{n}g^{lm}(\varepsilon_{k}z)\frac{\partial\left(\varphi^{h}(z)\chi_{R}(\varepsilon_{k}z)\right)}{\partial z_{l}}\frac{\partial\tilde{u}_{k}}{\partial z_{m}}|g(\varepsilon_{k}z)|^{\frac{1}{2}}dz\nonumber \\
 &  & +\int_{D^{+}(R/\varepsilon_{k})}\varphi^{h}(z)\chi_{R}(\varepsilon_{k}z)\tilde{u}_{k}|g(\varepsilon_{k}z)|^{\frac{1}{2}}dz\nonumber \\
 & = & \int_{\mathbb{R}_{+}^{n}}\left(\nabla\varphi^{h}\nabla\tilde{u}+\varphi^{h}\tilde{u}\right)dz+o(1).\label{eq:zetau2}
\end{eqnarray}
So, by (\ref{eq:zetau}) and (\ref{eq:zetau2}) we obtain (\ref{eq:scalare}). 

Now (\ref{eq:scalare}) and (\ref{eq:utilde}) imply that $\tilde{u}=0$.
Thus

\begin{eqnarray*}
\frac{1}{\varepsilon_{k}^{n}}\int_{M}f'(W_{\varepsilon_{k},\xi_{k}}(x))u_{k}^{2}(x)d\mu_{g} & \le & \frac{1}{\varepsilon_{k}^{n}}\int_{I_{g}(R)}f'\left(U_{\varepsilon}\left(\left(\psi_{\xi_{k}}^{\partial}\right)^{-1}(x)\right)\right)u_{k}^{2}(x)d\mu_{g}\\
 & = & c\int_{D^{+}(R/\varepsilon_{k})}f'(U(z))\tilde{u}_{k}^{2}(z)=o(1)
\end{eqnarray*}
which contradicts (\ref{eq:lim1}). This concludes the proof.
\end{proof}



\begin{thebibliography}{10}



\bibitem{BBM07}
V.~Benci, C.~Bonanno, and A.~M. Micheletti, \emph{On the multiplicity of
  solutions of a nonlinear elliptic problem on {R}iemannian manifolds}, J.
  Funct. Anal. \textbf{252} (2007), no.~2, 464--489.

\bibitem{BP05}
J.~Byeon and J.~Park, \emph{Singularly perturbed nonlinear elliptic problems on
  manifolds}, Calc. Var. Partial Differential Equations \textbf{24} (2005),
  no.~4, 459--477.

\bibitem{DFW}
M.~Del~Pino, P.~Felmer, and J.~Wei, \emph{On the role of mean curvature in some
  singularly perturbed {N}eumann problems}, SIAM J. Math. Anal. \textbf{31}
  (1999), no.~1, 63--79.

\bibitem{Es92}
J.~F. Escobar, \emph{Conformal deformation of a {R}iemannian metric to a scalar
  flat metric with constant mean curvature on the boundary}, Ann. of Math. (2)
  \textbf{136} (1992), no.~1, 1--50.

\bibitem{Es94}
J.~F. Escobar, \emph{Addendum: ``{C}onformal deformation of a {R}iemannian metric to
  a scalar flat metric with constant mean curvature on the boundary'' [{A}nn.\
  of {M}ath.\ (2) {\bf 136} (1992), no.\ 1, 1--50]},
  Ann. of Math. (2) \textbf{139} (1994), no.~3, 749--750.

\bibitem{GMtmna}
M.~Ghimenti and A.~M.~Micheletti, \emph{Positive solutions of singularly
  perturbed nonlinear elliptic problem on {R}iemannian manifolds with
  boundary}, Topol. Methods Nonlinear Anal. \textbf{35} (2010), no.~2,
  319--337. 

\bibitem{GMjfpta}
M.~Ghimenti and A.~M.~Micheletti, \emph{Nondegeneracy of critical
  points of the mean curvature of the boundary for {R}iemannian manifolds}, J.
  Fixed Point Theory Appl. \textbf{14} (2013), no.~1, 71--78. 

\bibitem{GrPi00}
M.~Grossi and A.~Pistoia, \emph{On the effect of critical points of distance
  function in superlinear elliptic problems}, Adv. Differ. Eq. \textbf{5}
  (2000), no.~4, 1397--1420.

\bibitem{GrPiW}
M.~Grossi, A.~Pistoia, and J.~Wei, \emph{Existence of multipeak solutions fora
  semilinear Neumann problem via nonsmooth critical point theory}, Calc. Var.
  Partial Differ. Eq. \textbf{2} (2000), 143--175.

\bibitem{G}
C.~Gui, \emph{Multipeak solutions for a semilinear Neumann problem}, Duke Math
  J. \textbf{84} (1996), no.~3, 739--769.

\bibitem{GWW}
C.~Gui, J.~Wei, and M.~Winter, \emph{Multiple boundary peak solutions for some
  singularly perturbed Neumann problems}, Ann. Inst. H. Poincar\'{e} Anal. Non
  Lin\'{e}aire \textbf{17} (2000), no.~1, 47--82.

\bibitem{GuW}
C.~Gui and J.~Wei, \emph{Multiple interior peak solutions for some singularly
  perturbed Neumann problems.}, J. Differ. Eq. \textbf{158} (1999), no.~1,
  1--27.

\bibitem{Li}
Y.Y. Li, \emph{On a singularly perturbed equation with Neumann boundary
  condition}, Comm. Partial Differential Equations \textbf{23} (1998), no.~3-4,
  487--545.

\bibitem{LNT}
C.S. Lin, W.M. Ni, and I.~Takagi, \emph{Large amplitude stationary solutions to
  a chemiotaxis system}, J. Differential Equations \textbf{72} (1988), no.~1,
  1--27.

\bibitem{MP09}
A.M.~Micheletti and A.~Pistoia, \emph{The role of the scalar
  curvature in a nonlinear elliptic problem on {R}iemannian manifolds}, Calc.
  Var. Partial Differential Equations \textbf{34} (2009), no.~2, 233--265.

\bibitem{Mor}
F.~Morgan, \emph{Riemannian geometry}, A.K.~Peters, Wellesley MA, USA, 1998.

\bibitem{Na54}
J.~Nash, \emph{{$C\sp 1$} isometric imbeddings}, Ann. of Math. (2)
  \textbf{60} (1954), 383--396.

\bibitem{NT1}
W.~N. Ni and I.~Takagi, \emph{On the shape of least-energy solutions to a
  semilinear Neumann problem}, Comm. Pure Appl. Math. \textbf{44} (1991),
  no.~7, 819--851.

\bibitem{NT2}
W.~N. Ni and I.~Takagi, \emph{Locating the peaks of least-energy solutions to a semilinear
  Neumann problem}, Duke Math. J. \textbf{70} (1993), no.~2, 247--281.

\bibitem{W1}
J.~Wei, \emph{On the boundary spike layer solutions to a singularly perturbed
  Neumann problem}, J. Differential Equations \textbf{134} (1997), no.~1,
  104--133.

\bibitem{W98}
J.~Wei, \emph{On the interior spike layer solutions to a singularly perturbed
  Neumann problem.}, Tohoku Math. J. \textbf{50} (1998), no.~2, 159--178.

\bibitem{WW}
J.~Wei and M.~Winter, \emph{Multipeak solutions for a wide class of singular
  perturbation problems}, J. London Math. Soc. \textbf{59} (1999), no.~2,
  585--606.

\end{thebibliography}
\end{document}